\crefname{hypothesis}{Hypothesis}{Hypotheses}
\Crefname{ALC@unique}{Line}{Lines}
\colorlet{texcscolor}{blue!50!black}
\colorlet{texemcolor}{red!70!black}
\colorlet{texpreamble}{red!70!black}
\colorlet{codebackground}{black!25!white!25}
\lstdefinestyle{siamlatex}{%
  style=tcblatex,
  texcsstyle=*\color{texcscolor},
  texcsstyle=[2]\color{texemcolor},
  keywordstyle=[2]\color{texemcolor},
  moretexcs={cref,Cref,maketitle,mathcal,text,headers,email,url},
}
\DeclareTotalTCBox{\code}{ v O{} }
{ 
  fontupper=\ttfamily\color{black},
  nobeforeafter,
  tcbox raise base,
  colback=codebackground,colframe=white,
  top=0pt,bottom=0pt,left=0mm,right=0mm,
  leftrule=0pt,rightrule=0pt,toprule=0mm,bottomrule=0mm,
  boxsep=0.5mm,
  #2}{#1}
\patchcmd\newpage{\vfil}{}{}{}
\def\beq{\begin{equation}}
\def\eeq{\end{equation}}
\def\bgam{ {\boldsymbol{\gamma}} }
\numberwithin{equation}{section}
\def\spn{\mathrm{span}}
\newcommand{\spnorm}[1]{{\|{#1}\|_{\mathrm{sp}}} }
\def\beq{\begin{equation}}
\def\eeq{\end{equation}}
\def\cN{ {{\mathcal N}}}
\def\cP{ {{\mathcal P}}}
\def\fQ{ {{ \mathfrak Q}}}
\def\fZ{ {{ \mathfrak Z}}}
\def\mathand{{\quad \quad \mathrm{and} \quad \quad}}
\def\R{ {\mathbb{R}} }
\def\K{ {\mathbb{K}} }
\def\C{ {\mathbb{C}} }
\def\N{ {\mathbb{N}} }
\def\cpd{{\cp{\bA,\bB,\bC}}}
\def\cA{ {\mathcal A} }
\def\cM{{\mathcal M}}
\def\cN{ {\mathcal N} }
\def\cP{{\mathcal P}}
\def\cS{{\mathcal S} }
\def\cT{{\mathcal T}}
\def\cY{{\mathcal Y}}
\def\bA{ {\mathbf{A}} }
\def\bB{ {\mathbf{B}} }
\def\bC{ {\mathbf{C}} }
\def\hbC{ {\hat{\bC}} }
\def\bE{ {\mathbf{E}} }
\def\bM{ {\mathbf{M}} }
\def\bfm{ {\mathbf{m}} }
\def\bP{ {\mathbf{P}} }
\def\bS{ {\mathbf{S}} }
\def\bT{ {\mathbf{T}} }
\def\hbT{ {\hat{\mathbf{T}}} }
\def\bU{ {\mathbf{U}} }
\def\bV{ {\mathbf{V}} }
\def\bX{ {\mathbf{X}} }
\def\bY{ {\mathbf{Y}} }
\def\bZ{ {\mathbf{Z}} }
\def\bQ{ {\mathbf{Q}} }
\def\bS{ {\mathbf{S}} }
\def\bR{ {\mathbf{R}} }
\def\teT{ {\mathrm{T}}  }
\def\teH{ {\text{H}}  }
\def\ba{ {\vec{a}} }
\def\bb{ {\vec{b}} }
\def\bc{ {\vec{c}} }
\def\hbc{ {\hat{\bc}} }
\def\bq{ {\vec{q}} }
\def\bv{ {\vec{v}} }
\def\bx{ {\vec{x}} }
\def\by{ {\vec{y}} }
\def\bz{ {\vec{z}} }
\def\lam{ {\lambda} }
\def\blam{ {\vec{\lambda}} }
\def\bLam{ { {\mathfrak{L}}} }
\def\tblam{ {\tilde{\vec{\lambda}}} }
\def\hbLam{ {\hat{\bLam}} }
\def\b0{ {\vec{0}} }
\def\hcT{{\hat{\cT}}}
\def\bfballK{{  \mathrm{BF}^{\K,0}_{\epsilon}  }}
\def\bfballNK{{  \mathrm{BF}^{\K,N}_{\epsilon}  }}
\def\bfballNplK{{  \mathrm{BF}^{\K,N+1}_{\epsilon}  }}
\def\bfregK{{  \mathrm{BF}^{\K}_{\epsilon}  }}
\def\bfballC{{  \mathrm{BF}^{\C,0}_{\epsilon}  }}
\def\bfballNC{{  \mathrm{BF}^{\C,N}_{\epsilon}  }}
\def\bfregC{{  \mathrm{BF}^{\C}_{\epsilon}  }}
\def\bfballR{{  \mathrm{BF}^{\R,0}_{\epsilon}  }}
\def\bfballNR{{  \mathrm{BF}^{\R,N}_{\epsilon}  }}
\def\bfregR{{  \mathrm{BF}^{\R}_{\epsilon}  }}
\def\Gr{{ \mathrm{Gr} }}
\def\boldeta{ {\boldsymbol{\eta}} }
\def\bzeta{ {\boldsymbol{\zeta}} }
\def\rank{\mbox{rank}}
\def\cS{{\mathcal S}}
\newcommand{\df}[1]{{\bf{#1}}{\index{#1}}}
\def\ran{\mathrm{ran \ }}
\def\be{ {\mathbf{e}} }
\title{A recursive eigenspace computation for the Canonical Polyadic decomposition\thanks{\funding{Research supported by: (1) Flemish Government: This work was supported by the Fonds de la Recherche Scientifique--FNRS and the Fonds Wetenschappelijk Onderzoek--Vlaanderen under EOS Project no 30468160 (SeLMA); (2) KU Leuven Internal Funds C16/15/059 and ID-N project no 3E190402. (3) This research received funding from the Flemish Government under the “Onderzoeksprogramma Artificiële Intelligentie (AI) Vlaanderen” programme.  (4) Work supported by Leuven Institute for Artificial Intelligence (Leuven.ai). (5) In addition, Michiel Vandecappelle was partially supported by an SB Grant (1S66617N) from the Research Foundation---Flanders (FWO).}}}
\author{Eric Evert\footnotemark[2] 
\and Michiel Vandecappelle\footnotemark[2]
\and Lieven De Lathauwer\thanks{KU Leuven, Dept. of Electrical Engineering ESAT/STADIUS, Kasteelpark
Arenberg 10, bus 2446, B-3001 Leuven, Belgium and KU Leuven – Kulak, Group Science, Engineering and Technology, E.
Sabbelaan 53, B-8500 Kortrijk, Belgium
\newline \hbox{\quad} (Eric.Evert, Michiel.Vandecappelle, Lieven.DeLathauwer)@kuleuven.be.}}
\begin{document}
\maketitle

\begin{tcbverbatimwrite}{tmp_\jobname_abstract.tex}
\begin{abstract}
The canonical polyadic decomposition (CPD) is a compact decomposition which expresses a tensor as a sum of its rank-1 components. 
A common step in the computation of a CPD is computing a generalized eigenvalue decomposition (GEVD) of the tensor.
A GEVD provides an algebraic approximation of the CPD which can then be used as an initialization in optimization routines.

While in the noiseless setting GEVD exactly recovers the CPD, it has recently been shown that pencil-based computations such as GEVD are not stable. In this article we present an algebraic method for approximation of a CPD which greatly improves on the accuracy of GEVD. Our method is still fundamentally pencil-based; however, rather than using a single pencil and computing all of its generalized eigenvectors, we use many different pencils and in each pencil compute generalized eigenspaces corresponding to sufficiently well-separated generalized eigenvalues. The resulting ``generalized eigenspace decomposition" is significantly more robust to noise than the classical GEVD. 

Accuracy of the generalized eigenspace decomposition is examined both empirically and theoretically. In particular, we provide a deterministic  perturbation theoretic bound which is predictive of error in the computed factorization. 
\end{abstract}

\begin{keywords}
  tensors, canonical polyadic decomposition, generalized eigenvalue decomposition, multilinear algebra, multidimensional arrays
\end{keywords}

\begin{AMS}
 Primary:  15A69, 15A22. Secondary: 15A42.
\end{AMS}
\end{tcbverbatimwrite}
\input{tmp_\jobname_abstract.tex}


\section{Introduction}

Tensors, or multiindexed arrays, play an important role in fields such as machine learning and signal processing \cite{Cetal15,Setal17}. These higher-order generalizations of matrices allow for preservation of higher-order structure present in data and low-rank decompositions of tensors allow for compression of data and recovery of underlying information \cite{Setal17,Cetal15,CJ10}. One of the most popular decompositions for tensors is the canonical polyadic decomposition (CPD) which expresses a tensor as a sum of rank one tensors. An important feature of the CPD is that, with mild assumptions \cite{DD13,K77,SB00}, the CPD of a low-rank tensor is unique. Furthermore, for a low-rank tensor with a unique CPD, the CPD can often be found algebraically. Such an algebraic solution can typically be obtained with limited computation time, hence is often used as an initialization for optimization based methods when the tensor is noisy.

One of the most popular algorithms for algebraic computation of a CPD is the generalized eigenvalue decomposition (GEVD) \cite{DD14,DD17,SK90,LRA93}. The key idea behind the classical GEVD is that a factor matrix of a tensor may be obtained by computing the generalized eigenvectors of any subpencil of the tensor. However, it has recently been shown that pencil-based algorithms such as GEVD are not stable \cite{BBV19}. That is, the condition number for computing a generalized eigenvalue decomposition of a subpencil can be arbitrarily larger than the condition number \cite{ BV18} for computing the CPD of the tensor. In this article we present an extension of the GEVD algorithm which significantly improves the accuracy of algebraic computation of the CPD. This increase in accuracy is obtained by using many subpencils of the tensor to compute the CPD. In each subpencil we only compute generalized eigenvectors and eigenspaces which correspond to sufficiently well-separated generalized eigenvalues.

The ability to accurately compute generalized eigenvectors of a matrix pencil is heavily dependent on the separation between the generalized eigenvalues and the generalized eigenvectors of the pencil \cite{SS90,GV96,N07}. In the case that the generalized eigenvalues and eigenvectors are well-separated, a GEVD may be accurately computed. However, when either a pair of generalized eigenvalues or generalized eigenvectors is near parallel, the accuracy of the computed generalized eigenvectors deteriorates. As such, the GEVD algorithm performs well when there is a subpencil for the tensor in which all generalized eigenvalues are well-separated; however, GEVD runs into challenges if one is unable to find a subpencil in which all generalized eigenvalues are well-separated. 

When computing a GEVD of a rank $R$ tensor, one computes the generalized eigenvalues and eigenvectors of a pair of $R \times R$ matrices.  The generalized eigenvalues of this matrix pair can be naturally interpreted as a collection of $R$ points on the unit circle \cite{SS90}, so it is not difficult to see that the generalized eigenvalues cannot be well-separated when the tensor rank $R$ is sufficiently large.

We propose an alternative approach in which we exploit the fact that a tensor has many subpencils, and that a factor matrix of the tensor may be computed by combining information from many different subpencils. Using the language of the joint generalized eigenvectors (JGE vectors) \cite{EvertD19}, the primary goal of the GEVD algorithm is in fact to compute the JGE vectors of the tensor.

Let $\K=\R$ or $\C$. Suppose the tensor $\cT \in \K^{R \times R \times K}$ is a rank $R$ tensor with JGE vectors $\{\bx_r\}_{r=1}^R$. Then for every subpencil of $\cT$, the generalized eigenspaces are of the form $\spn (\{\bx_{r_1}, \dots, \bx_{r_k}\})$ for some subcollection of JGE vectors of $\cT$. Here $\{r_1,\dots,r_k\}$ is a subcollection of indices from $\{1,\dots,R\}$ with cardinality $k$.  Intuitively it follows that one may compute the JGE vectors of $\cT$ by computing  generalized eigenspaces for many different subpencils of $\cT$. 

An important feature of this approach is that the accuracy of computing a particular generalized eigenspace does not depend on the separation of the generalized eigenvalues which correspond to that  generalized eigenspace. Rather the accuracy depends on the separation of the set of generalized eigenvalues corresonding to the generalized eigenspace of interest from the other generalized eigenvalues of the pencil as well as the separation between the generalized eigenspace itself and the other generalized eigenvectors of the pencil \cite{S73}. Thus by only computing generalized eigenspaces  which  correspond to collections of generalized eigenvalues that are well-separated from the other generalized eigenvalues of the pencil, one source of inaccuracy for the generalized eigenvalue decomposition, i.e. inaccuracy introduced by projecting a tensor to a matrix pencil \cite{BBV19}, is significantly reduced. 

\subsection{Notation and definitions}

Let $\K$ denote either $\R$ or $\C$. We denote scalars, vectors, matrices, and tensors by lower case $(a)$, bold lower case $(\vec{a})$, bold upper case $(\bA)$, and calligraphic script $(\cA)$, respectively. Additionally, fraktur font $(\mathfrak{A})$ is used to denote a subspace of $\K^I$. For a matrix $\bA \in \K^{I \times I}$, we let $\bA^{\teT}$ denote the transpose of $\bA$ while $\bA^\teH$ denotes the conjugate transpose of $\bA$. If $\bA$ is invertible, then we let $\bA^{-\teT}$ denote the inverse of $\bA^\teT$. A \df{tensor} is a multiindexed array with entries in $\K$. In this article we restrict our attention to tensors of order three. In other words, the tensors we consider are elements of $\K^{I_1 \times I_2 \times I_3}$. 

Given nonzero vectors $\ba \in \K^{I_1},\bb \in \K^{I_2}, \bc \in \K^{I_3}$, let 
\[
\ba \op \bb \op \bc \in \K^{I_1 \times I_2 \times I_3}
\]
denote the $I_1 \times I_2 \times I_3$ tensor with $i,j,k$ entry equal to $a_i b_j c_k$. A tensor of this form is called a \df{rank one tensor}. The minimal integer $R$ such that $\cM \in \K^{I_1 \times I_2 \times I_3}$ may be written as
\[
\cM= \sum_{r=1}^R \ba_r \op \bb_r \op \bc_r
\]
where the vectors $\ba_r,\bb_r,\bc_r$ have entries in $\K$ is called the $\K$-rank of the tensor $\cM$, and a decomposition of this form is called a {canonical polyadic decomposition} (CPD) of $\cM$. Compactly we write
$
\cM=\cpd  
$
where the matrices $\bA,\bB$ and $\bC$ have $\ba_r,\bb_r$ and $\bc_r$ as their $r$th columns, respectively. The matrices $\bA,\bB,\bC$ are called the \df{factor matrices} for $\cM$. 

Given a tensor $\cM \in \K^{I_1 \times I_2 \times I_3}$, let  $ \bM_{[1;3,2]}$   denote an $I_1 \times I_2 I_3$ matrix with columns $\{\bfm(:,k,\ell)\}_{\forall k,\ell}$. Here, the columns of $\bM_{[1;3,2]}$ are ordered so that the second index of $\bfm(:,k,\ell)$ increments faster than the third. The matrices $ \bM_{[2;3,1]}$ and $\bM_{[3;2,1]}$ are defined similarly. A matrix of the form $\bM_{[i;k,j]}$ is called a \df{mode-$i$ unfolding} of $\cM$.

Having defined the mode-$i$ unfoldings of $\cM$, we define 
\begin{align}
R_i (\cM) &\coloneqq \rank (\bM_{[i;k,j]}) \qquad \mathrm{for \ } i=1,2,3.
\end{align}
The integer triple $(R_1 (\cM), R_2 (\cM), R_3 (\cM))$ is called the \df{multilinear rank} of $\cM$. 

For tensors $\cM$ occurring in applications, $\cM$ is often thought of as a noisy measurement of some signal tensor of interest. That is, one can decompose $\cM = \cT' +\cN'$ where $\cT'$ is the true signal tensor of interest and $\cN'$ is measurement error. Most commonly the signal tensor $\cT'$ has low multilinear rank. That is, one has $R_i (\cT')  <\!\!< I_i$ for each $i$. In this case, under an appropriate unitary change of bases one has  $t'_{i,j,k} = 0$ if $i>R_1 (\cT')$ or $j>R_2 ( \cT')$ or $k>R_3 (\cT')$. In this choice of bases it is clear that $\cT'$ may be compressed to a tensor of size $R_1 (\cT') \times R_2 (\cT') \times R_3 (\cT')$ without loss of information. Such a compression is called an \df{orthogonal compression} of $ \cT'$ to its multilinear rank, and a popular choice of orthogonal compression is the multilinear singular value decomposition (MLSVD). See \cite{DDV00b} for details. 

Compression of a noisy measurement $\cM=\cT'+\cN'$ to the multilinear rank of $\cT'$ is a common step in computation of a CPD which both approximates the (core of) the signal tensor $\cT'$ and reduces computational complexity. However, such a compression has very few implications for the methods discussed in this article. For this reason we often restrict to considering tensors $\cY$ with the property $R_i(\cY)=I_i$ for $i=1,2,3$. For clarity, here $I_i$ refers to the $i^{th}$ dimension of the compressed tensor $\cY$. Intuitively, the symbol $\cY$ is used to denote a tensor which can be thought of as a compression of $\cM=\cT'+\cN'$ to the multilinear rank of $\cT'$. In particular, we may write $\cY = \cT+\cN$ where $\cT$ is an orthogonal compression $\cT'$ to its multilinear rank and $\cN$ is noise.

It is not difficult to show that the $\K$-rank of a tensor $\cY$ is bounded from below by each of the $R_i(\cY)$,  e.g., see \cite{Lan12}. For most tensors the inequality is strict. However, for the signal portion $\cT$ of a tensor $\cY$ occurring in applications, one often has (up to a permutation of indices) $\rank_\K (\cT)=R_1(\cT)=R_2(\cT) \geq R_3(\cT)$. In light of this, the tensors $\cY  = \cT +\cN$ we consider are taken to have size $R \times R \times K$ where $K \leq R$ and the signal portion $\cT$ of $\cY$ is assumed to have $\K$-rank $R$.\footnote{Note that one always has $\rank_\K (\cT') \geq R_3(\cT')$ hence the case $K>R$ does not occur after  orthogonal compression.} To summarize, throughout the article we let $\cM$ denote an arbitrary tensor of size $I_1 \times I_2 \times I_3$, we let $\cY$ denote a full multilinear rank tensor of size $R \times R \times K$, and we let $\cT$ denote a $R \times R \times K$ tensor with $\K$-rank equal to $R$ and full multilinear rank.

Define the \df{mode-1 product} between a tensor $\cM \in \K^{I_1 \times I_2 \times I_3}$ and a matrix $\bU \in \K^{J_1 \times I_1}$ to be the tensor $\cM \cdot_1 \bU \in \K^{J_1 \times I_2 \times I_3}$ with mode-$1$ fibers given by
\[
( \cM \cdot_1 \bU)(:,k,\ell) = \bU \cdot \vec{m} (:,k,\ell) \qquad \mathrm{for \ all \ } k,\ell. 
\]
The mode-$2$ and mode-$3$ products are defined analogously. Say a tensor $ \cY \in \K^{R \times R \times K}$ is \df{slice mix invertible} if there is a vector $\bv \in \R^{K}$ such that the matrix $\cY \cdot_3 \bv^{\text{T}}$ is invertible. That is, $ \cY$ is slice mix invertible if there exists a linear combination of the matrices $\bY(:,:,k)$ for $k=1,\dots, K$ which is invertible.  We call the matrices $\bY(:,:,k)$ the \df{frontal slices} of $\cY$ and use $\bY_k$ to denote $\bY(:,:,k)$ for $k=1,\dots, K$.

Let $\cY \in \K^{R \times R \times K}$. An \df{implicit subpencil} of $ \cY$ is a tensor of the form $ \cY \cdot_3 \bV \in \K^{R \times R \times 2}$ where $\bV \in \K^{2 \times K}$ has rank $2$. More generally, call a tensor of the form $ \cY \cdot_3 \bV \in \K^{R \times R \times L}$ where $\bV \in \K^{L \times K}$ has full row rank an \df{implicit subtensor} of $ \cY$. For emphasis we note that implicit subpencils and implicit subtensors differ from the usual notions in that they can be revealed by modal multiplication in the third mode of the tensor. Throughout the article, implicit subtensors and subpencils of a given tensor are denoted using the calligraphic letter $\cS$. In the reminder of the article we drop the distinction ``implicit", as all subpencils and subtensors we consider are implicit.

\subsection{Joint generalized eigenvalues and eigenvectors}
Given a tensor $\cT \in \K^{R \times R \times K}$, we say a nonzero vector $ \bz \in \K^R$ is a \df{joint generalized eigenvector} (JGE vector) of $ \cT$ if there exists a $\blam \in \K^K$ and a nonzero vector $ \bq \in \K^R$ such that
\beq
\label{eq:JGEVdef}
\bT_{k}  \bz = \lam_{k}  \bq
\eeq
for all $k = 1, \dots, K$. Note that assuming $ \bq \neq \b0$ guarantees that $\lam_k=0$ if and only if $ \bT_k \bz =\b0$. If $(\blam,\bz)$ is a solution to \eqref{eq:JGEVdef}, then the one-dimensional subspace $\bLam \coloneqq \spn (\blam)  \subset \K^K$ is a \df{joint generalized eigenvalue} (JGE value) of $ \cT$ and $(\bLam,\bz)$ is a \df{joint generalized eigenpair} (JGE pair) of $\cT$.\footnote{If $K=2$, then joint generalized eigenvectors and joint generalized eigenvalues are exactly equal to classical generalized eigenvalues and generalized eigenvectors (in the sense of \cite{SS90}).} Throughout the article we make frequent use of the following theorem \cite[Theorem 2.3 (1)]{EvertD19} which explains a connection between JGE vectors and tensor rank.

\begin{theorem}
\label{theorem:rankvsmult}
Let $\cT' \in \K^{I_1 \times I_2 \times I_3}$ and assume $\cT'$ has multilinear rank $(R,R,K)$. Let $\cT \in \K^{R \times R \times K}$ be an orthogonal compression of $\cT'$. Then $\cT'$ has $\K$-rank $R$ if and only if the JGE vectors of $\cT$ span $\K^R$.  Furthermore, if $\cT=\cpd$ has $\K$-rank $R$, then the JGE values of $\cT$ are equal to the spans of columns of the factor matrix $\bC$, and the columns of $\bB^{-\mathrm{T}}$ are JGE vectors of $\cT$. 
\end{theorem}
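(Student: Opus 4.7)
The plan is to reduce the claim to the compressed tensor $\cT$, since an orthogonal change of basis does not affect $\K$-rank, and then prove the two implications; the ``furthermore'' part will fall out of the constructive half of the forward direction. Throughout I will exploit the multilinear rank hypothesis $(R,R,K)$: it forces the first two factor matrices of any rank-$R$ decomposition of $\cT$ to be invertible, and it gives a lower bound $\rank_\K(\cT) \ge R_1(\cT) = R$ on the $\K$-rank.

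\emph{Forward direction and ``furthermore''.} Assume $\cT = \cpd$ with factors $\bA,\bB \in \K^{R \times R}$ and $\bC \in \K^{K \times R}$. The multilinear rank assumption forces $\bA$ and $\bB$ to be invertible and $\bC$ to have full row rank. The frontal slices are $\bT_k = \bA\,\diag(\bC(k,:))\,\bB^{\teT}$. Letting $\bz_r$ denote the $r$-th column of $\bB^{-\teT}$, so $\bB^{\teT}\bz_r = \be_r$, a direct computation yields $\bT_k \bz_r = c_{k,r}\,\ba_r$ for every $k$. Hence $(\spn(\bC(:,r)),\bz_r)$ is a JGE pair with accompanying vector $\ba_r$, and the $\bz_r$ span $\K^R$ because $\bB^{-\teT}$ is invertible. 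For the reverse inclusion of JGE values, take an arbitrary JGE pair $(\spn(\blam),\bz)$ with $\bT_k\bz = \lam_k \bq$. Setting $\by = \bB^{\teT}\bz \neq \vec{0}$ and multiplying through by $\bA^{-1}$, I would compare entries to obtain $y_r\, c_{k,r} = \lam_k\,(\bA^{-1}\bq)_r$ for all $k,r$; any index $r$ with $y_r \neq 0$ then forces $\bC(:,r)$ to be a scalar multiple of $\blam$, so $\spn(\blam) = \spn(\bC(:,r))$.

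\emph{Reverse direction.} Assume the JGE vectors of $\cT$ span $\K^R$. Select $R$ linearly independent JGE vectors $\bz_1,\dots,\bz_R$ with associated data $\bT_k \bz_r = (\lam_r)_k\, \bq_r$, each $\bq_r \neq \vec{0}$. Assemble $\bZ = [\bz_1 \mid \cdots \mid \bz_R]$ (invertible by construction), $\bQ = [\bq_1 \mid \cdots \mid \bq_R]$, and $\hbC \in \K^{K \times R}$ with columns $\blam_r$. Then $\bT_k \bZ = \bQ\,\diag(\hbC(k,:))$, so $\bT_k = \bQ\,\diag(\hbC(k,:))\,\bZ^{-1}$. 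The mode-$1$ unfolding $\bT_{[1;3,2]}$ has column space contained in $\col(\bQ)$, so the multilinear rank assumption gives $R = R_1(\cT) \le \rank(\bQ) \le R$ and thus $\bQ$ is invertible. Expanding $\bT_k$ entrywise then yields
\[
\cT = \sum_{r=1}^R \bq_r \op \bd_r \op \blam_r,
\]
where $\bd_r$ is the $r$-th column of $\bZ^{-\teT}$. This is a rank-at-most-$R$ decomposition of $\cT$; combined with $\rank_\K(\cT) \ge R_1(\cT) = R$, it follows that $\rank_\K(\cT) = R$.

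\emph{Main obstacle.} The tricky point lies in the reverse direction: one must convert the algebraic identity $\bT_k = \bQ\,\diag(\hbC(k,:))\,\bZ^{-1}$ into a genuine rank-$R$ CPD. A priori $\bQ$ could be rank-deficient, in which case the displayed sum would realize $\cT$ as a tensor of rank strictly less than $R$ and the argument would stall. The multilinear rank assumption is exactly what rescues this step, pinning $\rank(\bQ) = R$; tracking this use of the hypothesis is the main care needed in the write-up.
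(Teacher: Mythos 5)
The paper does not prove this theorem; it is quoted verbatim from the reference [EvertD19, Theorem~2.3(1)], so there is no in-paper proof against which to compare your argument. Reviewing your proof on its own merits: it is correct, and it takes the natural route. The forward direction and the ``furthermore'' follow from writing the slices as $\bT_k = \bA\,\mathrm{diag}(\bC(k,:))\,\bB^{\teT}$ (after using the multilinear-rank hypothesis to make $\bA$, $\bB$ invertible) and reading off that the columns of $\bB^{-\teT}$ are JGE vectors with JGE values $\spn(\bC(:,r))$. The converse correctly assembles $R$ linearly independent JGE vectors into $\bT_k = \bQ\,\mathrm{diag}(\hbC(k,:))\,\bZ^{-1}$ and uses $R_1(\cT)=R$ to rescue $\bQ$ from rank-deficiency, which is exactly the point where the multilinear-rank hypothesis earns its keep.

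One small step you glide over: in the reverse-inclusion argument for JGE values you conclude from $y_r\,\bC(:,r) = (\bA^{-1}\bq)_r\,\blam$ (for some $r$ with $y_r \neq 0$) that $\spn(\bC(:,r)) = \spn(\blam)$. This requires knowing that the scalar $(\bA^{-1}\bq)_r$ is nonzero, equivalently that $\bC(:,r) \neq \vec{0}$. That holds because $\cT$ having $\K$-rank exactly $R$ forbids a zero column in $\bC$ (such a column would delete a rank-one term and drop the rank below $R$); it is worth saying so explicitly, since this is the only place where rank exactly $R$ rather than rank at most $R$ is being used in that half of the argument. With that sentence added, the proof is complete.
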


Given a subspace $\fZ \subset \K^R$ of dimension $\ell$ and a tensor $\cT \in \K^{R \times R \times K}$, say $\fZ$ is a \df{joint generalized eigenspace} of $\cT$ if there is a subspace $\fQ \subset \K^R$ of dimension less than or equal to $\ell$ such that
\[
\bT_k \fZ \subseteq \fQ \qquad \mathrm{ \ for \ all \ } k=1,\dots, K. 
\]
 Here $\bT_k \fZ$ denotes the subspace $\{\bT_k \bz : \bz \in \fZ\} \subset \K^R$. In the case that $K=2$ we simply call $\fZ$ a \df{generalized eigenspace}.\footnote{Generalized eigenspaces are sometimes called deflating subspaces by other authors.} Given a JGE value $\bLam \subset \K^{K}$ of a tensor $\cT \in \K^{R \times R \times K}$, let $\fZ_\bLam \subset \K^R$ be the subspace of joint generalized eigenvectors corresponding to $\bLam$. One may easily show that $\fZ_\bLam$ is a joint generalized eigenspace of $\cT$.

\subsubsection{Generalized eigenspaces of subtensors}

The main goal of this article is to improve the accuracy of algebraic (pencil-based) methods for computing a CPD. Key observations which allow us to accomplish this are that generalized subspaces may often be more stably computed than individual generalized eigenvectors and that the joint generalized eigenvectors of a tensor may be recovered by computing generalized eigenspaces for sufficiently many subtensors. The following proposition helps formalize this idea.  
\begin{proposition}
\label{proposition:FormOfEigenspaces}
Let $\cT \in \K^{R \times R \times K}$ be a tensor of $\K$-rank $R$ and let $\bV \in \K^{K \times L}$ be a matrix with full row rank. Set $\cS= \cT \cdot_3 \bV^{\teT}$, assume $\cS$ is slice mix invertible, and let $\bLam_\cS$ be a JGE value of $\cS$. There exist JGE values $\bLam_1, \dots, \bLam_J$ of $\cT$ such that
\[
\fZ_{\bLam_\cS} = \spn (\{ \mathfrak{Z}_{\mathfrak{L}_j}\}_{j=1}^J).
\]
\end{proposition}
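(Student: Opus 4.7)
My strategy is to pull both $\cT$ and $\cS$ back to diagonal form through the CPD of $\cT$, so that the JGE equations for $\cS$ reduce to a collection of scalar equations from which the decomposition of $\fZ_{\bLam_\cS}$ can be read off directly.

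First, I invoke Theorem \ref{theorem:rankvsmult} to fix a CPD $\cT=\cpd$. Then $\bT_k=\bA\,\mathrm{diag}(\bC(k,:))\,\bB^\teT$, the columns $\bz_1,\dots,\bz_R$ of $\bB^{-\teT}$ are JGE vectors of $\cT$, and the corresponding JGE values are $\bLam_r:=\spn(\bC(:,r))$. Applying the mode-$3$ product gives $\bS_\ell=\bA\,\mathrm{diag}((\bV^\teT\bC)(\ell,:))\,\bB^\teT$, so a direct computation yields $\bS_\ell\bz_r=(\bV^\teT\bC)(\ell,r)\,\bA(:,r)$ for all $\ell,r$. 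Thus each $\bz_r$ is a JGE vector of $\cS$ whose tentative JGE value is $\spn(\bV^\teT\bC(:,r))$. Slice-mix-invertibility of $\cS$ rules out the degenerate case $\bV^\teT\bC(:,r)=\b0$: otherwise the nonzero $\bz_r$ would lie in the kernel of every frontal slice of $\cS$, obstructing invertibility of any linear combination.

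Next, I take an arbitrary $\bz\in\fZ_{\bLam_\cS}$ and write the JGE relation $\bS_\ell\bz=\lam_\ell\bq$ with $\blam\in\bLam_\cS\setminus\{\b0\}$ and $\bq\neq\b0$; substituting $\bw:=\bB^\teT\bz$ and $\bp:=\bA^{-1}\bq$ reduces the equation to the entrywise diagonal system
\[
(\bV^\teT\bC)(\ell,r)\,w_r=\lam_\ell\,p_r\qquad\text{for all }\ell,r.
\]
A coordinatewise case analysis shows that $w_r\neq 0$ forces $p_r\neq 0$ and $\spn(\bV^\teT\bC(:,r))=\bLam_\cS$, while $p_r=0$ forces $w_r=0$ (using $\bV^\teT\bC(:,r)\neq\b0$). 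Let $\bLam_1,\dots,\bLam_J$ enumerate the distinct $\cT$-JGE values among $\{\bLam_r:\spn(\bV^\teT\bC(:,r))=\bLam_\cS\}$, and set $I:=\{r:\bLam_r\in\{\bLam_1,\dots,\bLam_J\}\}$. Then $\bw$ is supported in $I$, so $\bz=\bB^{-\teT}\bw\in\spn\{\bz_r:r\in I\}=\spn(\{\fZ_{\bLam_j}\}_{j=1}^J)$. Conversely, for any $r$ with $\bLam_r=\bLam_j$ the construction gives $\spn(\bV^\teT\bC(:,r))=\bLam_\cS$, so $\bz_r\in\fZ_{\bLam_\cS}$, and linearity yields $\fZ_{\bLam_j}\subseteq\fZ_{\bLam_\cS}$. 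Combining the two inclusions gives the claim.

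The only real obstacle is the bookkeeping in the coordinatewise case analysis, where one must simultaneously handle trivial coordinates ($p_r=w_r=0$) and rule out the pathological case $\bV^\teT\bC(:,r)=\b0$; the latter is precisely where slice-mix-invertibility is used. Once this is dispatched, the proposition reduces to the observation that $\fZ_{\bLam_\cS}$ is the image under $\bB^{-\teT}$ of the coordinate subspace indexed by $I$, which partitions along the distinct $\cT$-JGE values $\bLam_1,\dots,\bLam_J$ sitting above $\bLam_\cS$.
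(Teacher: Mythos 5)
Your proof is correct, but it takes a genuinely different route from the paper's. You pull everything into the diagonal coordinates furnished by the CPD $\cT=\cpd$: the change of variables $\bw=\bB^\teT\bz$, $\bp=\bA^{-1}\bq$ collapses the JGE relation for $\cS$ to the entrywise system $(\bV^\teT\bC)(\ell,r)\,w_r=\lam_\ell p_r$, from which both inclusions are read off by a coordinatewise case analysis, and slice-mix invertibility enters only to exclude $\bV^\teT\bC(:,r)=\b0$. The paper instead cites \cite[Proposition 5.1]{EvertD19} to identify each JGE value of $\cS$ as $\bV^\teT\bLam$ for some JGE value $\bLam$ of $\cT$, observes the easy inclusion $\spn(\{\fZ_{\bLam_j}\})\subset\fZ_{\bLam_\cS}$, and closes the reverse inclusion by a spanning argument: by Theorem \ref{theorem:rankvsmult} and \cite[Lemma 2.2]{EvertD19}, the spaces $\fZ_{\bLam_{\cS,i}}$ span $\K^R$ and have pairwise trivial intersection, and since the JGE vectors of $\cT$ also span $\K^R$ and each falls into some $\fZ_{\bLam_{\cS,i}}$, the containment must be equality. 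Your approach is more self-contained and constructive (it re-derives the form of the JGE values of $\cS$ rather than invoking an external result), while the paper's is shorter because it reuses structural lemmas from \cite{EvertD19}. One small gap to flag: when you equate $\spn\{\bz_r:r\in I\}$ with $\spn(\{\fZ_{\bLam_j}\}_{j=1}^J)$, and again when "linearity yields $\fZ_{\bLam_j}\subseteq\fZ_{\bLam_\cS}$", you are implicitly asserting that each $\fZ_{\bLam_j}$ is exactly $\spn\{\bz_r:\bLam_r=\bLam_j\}$ rather than something larger. That is true here (it follows from the same diagonalized computation applied to $\cT$ itself, or from a dimension count using that the $\fZ_\bLam$ span $\K^R$ with trivial pairwise intersections), but Theorem \ref{theorem:rankvsmult} as quoted only says the columns of $\bB^{-\teT}$ \emph{are} JGE vectors, not that they exhaust the JGE spaces, so a sentence justifying this would tighten the argument.
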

\begin{proof}
First note that assuming $\cS$ is slice mix invertible implies $\cT$ is slice mix invertible. Now, following the proof of \cite[Proposition 5.1]{EvertD19}, one finds that each joint generalized eigenvalue of $\cS$ has the form $\bV^{\teT} \bLam$ for some JGE value $\bLam$ of $\cT$. Note that one must have $\bV^\teT \bLam \neq \b0$ as otherwise Theorem \ref{theorem:rankvsmult} would imply that $\cS$ has rank less than $R$, contradicting the assumption that $\cS$ is slice mix invertible. Given some JGE value $\bLam_\cS$ of $\cS$, let $\bLam_1,\dots,\bLam_J$ be those joint generalized eigenvalues of $\cT$ such that $\bV^{\teT} \bLam_j=\bLam_\cS$. It is straightforward to check that 
\beq
\label{eq:eigspacecontain}
\spn (\{ \fZ_{\bLam_j}\}_{j=1}^J) \subset \fZ_{\bLam_\cS}.
\eeq

To prove the reverse containment, use the assumption that $\cS$ is slice mix invertible and the fact that $\cT$ has rank $R$ to conclude $\cS$ has rank $R$. Letting $\{\bLam_{\cS,1}, \dots, \bLam_{\cS,\ell}\}$ be the set of JGE values of $\cS$, it follows from Theorem \ref{theorem:rankvsmult} together with \cite[Lemma 2.2]{EvertD19} that the collection of spaces $\{\fZ_{\bLam_{\cS,i}}\}_{i=1}^\ell$ spans $\K^R$ and also that these spaces pairwise have trivial intersection. Again using Theorem \ref{theorem:rankvsmult} shows that the JGE vectors of $\cT$ span $\K^R$. Thus, since each JGE vector of $\cT$ lies in some $\fZ_{\bLam_{\cS,i}}$, the containment in equation \eqref{eq:eigspacecontain} must in fact be an equality.  \end{proof}

Proposition \ref{proposition:FormOfEigenspaces} has the implication that one may, for example, compute joint generalized eigenvectors of $\cT$ by computing intersections of generalized eigenspaces of various different subpencils of $\cT$. In this approach one must compute enough generalized eigenspaces so that all individual generalized eigenvectors may be recovered by taking intersections of generalized eigenspaces.

\subsubsection{The generalized eigenvalue decomposition for CPD computation}

We now briefly recall the main idea behind the GEVD \cite{DD14} algorithm for CPD computation. Suppose $\cT = \cpd \in \R^{R \times R \times K}$ has rank $R$ and is slice mix invertible, and let $\bV \in {K \times 2}$ be a generic matrix. Additionally set $\cS = \cT \cdot_3 \bV^\teT = [\![ \bA,\bB,\bV^\teT \bC]\!]$. Since $\bV$ is generic, the tensor $\cS$ has rank $R$ and is slice mix invertible. It follows, e.g. from Theorem \ref{theorem:rankvsmult}, that the factor matrix $\bB$ of $\cT$ can be computed by computing a generalized eigenvalue decomposition of $\cS$. Having solved for $\bB$, one may recover $\bA$ and $\bC$ by solving for $\bC \odot \bA$ in
\[
\bT_{[2;3,1]} = \bB (\bC \odot \bA)^\teT,
\]
then computing rank-$1$ decompositions of the columns of $\bC \odot \bA$. 

The key difficulty in this algorithm that we aim to address is that the step which maps $\cT$ to $\cS = \cT \cdot_3 \bV^\teT$ is not stable \cite{BBV19}. Intuitively, this is related to the fact that the separation between the columns of $\bV^\teT \bC$ is expected to be much smaller than the separation between the columns of $\bC$. Indeed when one projects a collection of vectors in $\K^K$ to $\K^2$, the angle between those vectors cannot increase and is expected to decrease.\footnote{In fact, if one projects a collection of vectors from $\R^K$ to $\R^2$, then the angle between some pair of vectors in the collection must decrease unless the collection is contained in a two-dimensional subspace of $\R^K$. }

The original GEVD algorithm was communicated in 1970 by Harshman who credits the algorithm to Jennrich \cite{Harsh70}. The GEVD algorithm has since been rediscovered may times, e.g., see \cite{SK90,LRA93,FBK94}. We note that aside from GEVD, there are several popular semi-algebraic methods for computing the CPD of a tensor, e.g., see \cite{delathauwer2006link,PTC15,RH13}.

\subsection{Guide to the reader}

Section \ref{sec:GESDmethod} explains the generalized eigenspace decomposition (GESD) algorithm for computation of a canonical polyadic decomposition. In Section \ref{sec:DetermBound}, we give a bound that is predictive for the maximal noise level that does not inhibit accurate decomposition. In particular, we give a deterministic bound using perturbation theory for JGE values. Sections \ref{sec:GESDPerformance} and \ref{sec:CRboundPerformance} present numerical experiments for the GESD algorithm. In Section \ref{sec:GESDPerformance} we compare the performance (in terms of timing and accuracy) of GESD to the classical GEVD algorithm. In Section \ref{sec:CRboundPerformance} we compare the accuracy of the GESD algorithm to the bound computed in Section and \ref{sec:DetermBound}.

\section{The GESD algorithm}
\label{sec:GESDmethod}

We now give a formal description of the GESD algorithm for computation of a canonical polyadic decomposition. The main step of the algorithm is to use generalized eigenspaces to recursively split a given tensor into subtensors of reduced rank until the subtensors all have rank one  (up to noise). The CPD of each  (noisy) rank one tensor may be computed by a  best rank one approximation, and the CPD for the full tensor is obtained by combining the decompositions of these rank one tensors.

 Let $\cT$ be an $R \times R \times K$ slice mix invertible tensor of rank $R$ with CPD $\cT= \cpd$. Let $\bV \in \K^{K \times 2}$ be a column-wise orthonormal matrix\footnote{Column-wise orthonormal matrices are used for numerical reasons.} of size $K \times 2$ and set $\cS =\cT \cdot_3 \bV^{\text{T}}$. The GESD algorithm begins by computing a collection of generalized eigenspaces $\fZ_1, \dots, \fZ_L$ for the generalized eigenvalue problem
\beq
\label{eq:gevd}
\bS_i \bz= \blam_i  \bq \mathrm{\ for \ } i=1,2
\eeq
such that $\spn (\{\fZ_\ell\}_{\ell=1}^L)=\K^R$. In each step the GESD algorithm computes as many well-separated eigenspaces as possible; however, to simplify exposition we will take $L=2$.

Say the generalized eigenvalues  of $\cS$ are split into two distinct groups of size $j_1$ and $j_2$  where $j_1+j_2=R$. The corresponding eigenspaces may then for instance be computed with a $\bQ\bZ$ decomposition\footnote{The $\bQ \bZ$ decomposition is the generalization of the Schur decomposition to matrix pencils, e.g., see \cite[Chapter VI, Theorem 1.9]{SS90}.} of the matrix pencil $(\bS_1,\bS_2)$ \cite{camps2019phd}. In particular, the $\bQ \bZ$ decomposition computes unitary matrices $\bQ,\bZ \in \K^R$ such that
\beq
\label{eq:QZ}
\bS_1 = \bQ \bX_1 \bZ^\teH \mathand \bS_2 = \bQ \bX_2 \bZ^\teH
\eeq
where $\bX_1$ and $\bX_2$ are (quasi\footnote{When working over $\R$, if $\bS$ has complex generalized { eigenvectors}, then since $\bQ$ and $\bZ$ have real entries, there will be $2 \times 2$ block matrices on the diagonal of $\bX_1$ and $\bX_2$. E.g., see \cite[Theorem 1.2]{K05}.}) upper triangular matrices with the classical generalized eigenvalues of the pencil on the diagonal. 

If the decomposition is ordered so that the $j_1$ generalized eigenvalues in the first cluster appear first on the diagonal of $\bX_1$ and $\bX_2$, then the first $j_1$ columns of $\bZ$ give us a matrix $\bZ_1$ whose columns form a basis for the corresponding generalized eigenspace of $\cS$. Similarly, we obtain a matrix $\bZ_2$ whose columns form a basis for the eigenspace corresponding to the next $j_2$  generalized eigenvalues by reordering the terms in the $\bQ\bZ$ decomposition so that these generalized eigenvalues appear first on the diagonal of $\bX_1$ and $\bX_2$. 

Using Proposition \ref{proposition:FormOfEigenspaces} together with Theorem \ref{theorem:rankvsmult} shows that the matrices $\bZ_1$ and $\bZ_2$ that we have obtained are of the form $
\mathbf{Z}_i=\bB^{-\teT} \bE_i$ where the $\bE_i$ have the property that the $j$th row of $\bE_1$ is nonzero if and only if the $j$th row of $\bE_2$ is a zero row, and vice versa.  Define the tensors $\cT^1$ and $\cT^2$ by 
\[
\cT^i= \cT \cdot_2 \bZ_i^\teT  =[\![\bA,\bZ_i^\teT \bB,\bC]\!]=[\![\bA,\bE_i^\teT,\bC]\!]
\]
and for $i=1,2$ set $R_i=\rank (\cT^i)$. Since the $\bE_i^\teT$ have zero columns corresponding to the zero rows of $\bE_i$, we have that $R_1+R_2=\rank (\cT)$. Moreover, suppose $\cT^i$ has CPD $[\![\bA_i, \check{\bE}_i,\bC_i]\!]$ for $i=1,2$. Here the matrix $\check{\bE}_i \in \K^{R \times R_i}$ is the matrix obtained by removing the zero columns of $\bE_i^\teT$. Then up to permutation and scaling, one has $\bA= ( \bA_1 \ \ \bA_2 )$.  Similarly, the factor matrix $\bC$ of $\cT$ has the form $\bC=(\bC_1 \ \bC_2)$ up to scaling and the same permutation as $\bA$. 

This procedure partitions the rank one terms of $\cT$ into two distinct collections so that one collection gives a decomposition for the tensor $\cT_1$ while the other collection gives a decomposition for $\cT_2$. The number of rank one terms in each collection, i.e., the rank of $\cT_1$ and of $\cT_2$, is determined by the number of generalized eigenvalues in each cluster.

The above methodology may be recursively applied to each $\cT^i$ to further reduce their ranks until one obtains a collection of tensors all having rank one (up to noise). The columns of the factor matrices $\bA$ and $\bC$ may then  be approximated by computing rank one approximations for each of these  (approximately) rank one tensors, see Section \ref{sec:rankoneterms} below for details.

Once we have computed the factor matrices $\bA$ and $\bC$, we may compute $\bB$ by solving
\[
\bT_{[2;3,1]}=\bB(\bC \odot \bA)^\teT.
\]

We emphasize that the GESD algorithm is guaranteed to recover a CPD of $\cT$ in the noiseless setting under the assumption that $\cT$ is an $R \times R \times K$ slice mix invertible tensor of rank $R$. Note that the CPD of such a tensor is not necessarily unique. In particular, if $\cT= \cpd$ meets this assumption but the factor matrix $\bC$ has a repeated column, i.e. if $\cT$ has a repeated joint generalized eigenvalue, then GESD can successfully recover a CPD of $\cT$ even though $\cT$ does not have a unique CPD.

\subsection{Rank one terms}
\label{sec:rankoneterms}
Consider the case where $\bZ_1 \in \K^{R \times 1}$ where $\bZ_1$ spans an eigenspace of dimension one. In this case $\bZ_1$ must have the form
\[
\bZ_1=\beta\bB^{-\teT}\vec{e}_\ell^{\teT}
\]
where $\vec{e}_\ell$ is a standard basis vector and $\beta \in \K$ is some constant. Thus we have
\[
\cT  \tmprod{2} \bZ_1^\teT = \cp{\bA, \beta \vec{e}_\ell,\bC}= \cp{\vec{a}_\ell, \beta, \vec{c}_\ell}.
\]
In this case we may compute $\vec{a}_\ell$ and $\vec{c}_\ell$ by computing a rank one approximation of the matrix $\cT \tmprod{2} \bZ_1^\teT$. In practice one may verify that such a tensor has rank one (up to noise) by computing the multilinear singular values of $\cT \cdot_2 \bZ_1^T$.

\subsection{Choice of slices}

We observe that the average performance of the GESD algorithm is improved by first considering a pencil formed by taking the first two frontal slices of the core of the MLSVD of the current subtensor on each splitting step. Intuitively, this is related to the fact that this matrix pencil {explains more of the  overall sum of squared entries than any other matrix pencil which can be formed from the frontal slices of the tensor. 

We emphasize that this strategy is not always successful. Indeed, there are many settings where the generalized eigenvalues obtained by considering the first two slices of the core of an MLSVD are poorly separated. When this occurs it is necessary to consider other matrix pencils. See Section \ref{sec:BadMLSVDSlices} for an example. In the case that a successful split is not obtained by considering slices of the MLSVD, we move on to considering random linear combinations of frontal slices.

Note that an MLSVD is always computed for each subtensor to address other potential issues (e.g., repeated factor matrix columns). Furthermore computing MLSVDs of the subtensors is cheap relative to other steps of the algorithm. Thus first considering slices of the MLSVDs has little impact on overall computational cost and on average significantly improves accuracy. 

Pseudo code for the GESD algorithm is found in Algorithm \ref{alg:mainpsuedo} and Algorithm \ref{alg:eigspacepsuedo}.

\begin{algorithm}
	\label{alg:mainpsuedo}
	\SetKwInOut{Input}{Input}
	\SetKwInOut{Output}{Output}
	\underline{function $\texttt{cpd\_gesd}$} $(\mathcal{M},R,\texttt{Random})$\\
	\Input{Tensor $\mathcal{M}$, rank $R$, boolean $\texttt{Random}$}
	\Output{$\cp{\mathbf{A},\mathbf{B},\mathbf{C}}$}
	$K=\min (\text{size}(\mathcal{T},3),R)$\\
	$[\mathbf{V}_1,\mathbf{V}_2,\mathbf{V}_3,\mathcal{Y}] = \texttt{mlsvd}(\mathcal{M},[R,R,K])$;
	{\begin{flushright} \% Compute MLSVD of $\cT$/ Compress \end{flushright}}
	\eIf{$R=1$}
	{			
		$\mathbf{A}_\text{core} = y_{111}$;\  $\mathbf{B}_\text{core} = 1$;\   $\mathbf{C}_\text{core} = 1$; \hfill { \% Trivial case}
	}
	{
		$[\mathbf{Z}_1,\ldots, \mathbf{Z}_N] = \texttt{qz\_split}(\mathcal{Y},\texttt{Random})$; {  \begin{flushright}\% Split JGE spaces into $N$ separated subgroups (Algorithm \ref{alg:eigspacepsuedo})\\ \end{flushright}}
		\For{$i = 1:N$}
		{
			$\mathcal{Y}^i = \mathcal{Y} \tmprod{2} \mathbf{Z}_i^\text{T}$; { \hfill\% Reduce tensor to $ N$th subgroup}\\
			$R_i=\texttt{size}(\mathbf{Z}_i,2)$\hfill \% Specify rank for $N$th subgroup\\
			$\cp{\mathbf{A}_i,\mathbf{B}_i,\mathbf{C}_i} = \texttt{cpd\_gesd}(\mathcal{Y}^i,R_i);$ {\begin{flushright} \% Recursion on smaller tensor and smaller rank \end{flushright}}		
		}
		$\mathbf{A}_\text{core} = [\mathbf{A}_1,\ldots,\mathbf{A}_N]$;\
		$\mathbf{C}_\text{core} = [\mathbf{C}_1,\ldots,\mathbf{C}_N]$; {  \begin{flushright} \% Build $\mathbf{A}$ and $\mathbf{C}$ from subblocks\\ \end{flushright}}
		$\mathbf{B}_\text{core} = \mathbf{Y}_{[2;3,1]} /(\mathbf{C}\kr \mathbf{A})^\text{T}$;{  \hfill \% Find $\mathbf{B}$ by solving linear system}\\
		
	}
	$\mathbf{A} = \mathbf{V}_1\mathbf{A}_\text{core}$;\ 
	$\mathbf{B} = \mathbf{V}_2\mathbf{B}_\text{core}$;\ 
	$\mathbf{C} = \mathbf{V}_3\mathbf{C}_\text{core}$; {\begin{flushright} \% Undo the MLSVD / Expand \end{flushright}}
	\caption{GESD recursive step}
\end{algorithm}
{}

\begin{algorithm}
	\label{alg:eigspacepsuedo}
	\SetKwInOut{Input}{Input}
	\SetKwInOut{Output}{Output}
	
	\underline{function $\texttt{qz\_split}$} $(\mathcal{Y},\texttt{Random})$\\
	\Input{ MLSVD core tensor $\mathcal{Y}$, boolean \texttt{Random}}
	\Output{$ N$ matrices $\mathbf{Z}_n$ spanning JGE spaces}
	$R = \texttt{size}(\mathcal{Y},1)$\\
	$N = 0$\\
	\While{$N<2$}{
	\eIf{\texttt{Random}}{
		$\mathcal{S} = \mathcal{Y} \tmprod{3} \texttt{qr}(\texttt{randn}(R,2))^\teT$; \hfill { \% Construct random pencil} \\
	}
	{
		\text{Set indices $k_1$ and $k_2$ of next set of MLSVD slices}\\
		$\mathcal{S} = \mathcal{Y}_{:,:,[{k_1},{k_2}]}$; \hfill { \% Take MLSVD slices as pencil} \\
	}
	$[\mathbf{X}_1,\mathbf{X}_2,\mathbf{Q},\mathbf{Z}] = \texttt{qz}(\mathbf{S}_{::1},\mathbf{S}_{::2})$; {\begin{flushright} { \% Compute QZ decomposition of pencil}\end{flushright}}
	\text{Order eigenvalues $x_{2,ii}/ x_{1,ii}$}\\
	\text{Compute chordal gaps between adjacent pairs $(x_{1,ii},x_{2,ii})$}\\
	\text{Set $N$ to be the number of chordal gaps above threshold}\\
}
	\text{Split eigenvalues into  $N$ clusters according to gaps above threshold}\\
	\For{$n = 1: N$}
	{
		\text{Define binary vector $\vec{i}_n$ indicating the $n$th cluster.}\\
		$[\mathbf{X}_{1,n},\mathbf{X}_{ 2,n},\mathbf{Q}_{n},\mathbf{Z}_{n}] = \texttt{ordqz}(\mathbf{X}_1,\mathbf{X}_2,\mathbf{Q},\mathbf{Z},\vec{i}_{n})$;  { \begin{flushright} \% Modify QZ decomposition such that $n$th cluster is at top\\ \end{flushright}}
		$\mathbf{Z}_{n} = \left({\mathbf{Z}_{n}}\right)_{:,1:\texttt{sum}(\vec{i}_{n})}$ \hfill { \% Extract JGE space of $n$th cluster} 
	}
	
	\caption{Splitting eigenspaces (over $\R$)\protect\footnotemark}
	
\end{algorithm}

\subsection{Handling of complex eigenvectors when working over $\R$}
\label{sec:ComplexEigs}

Our explanation of the GESD algorithm above focuses on the noiseless setting. An important aspect of this setting is that the joint generalized eigenvectors of a slice mix invertible real rank $R$ tensor $\cT \in \R^{R \times R \times K}$ are necessarily real valued \cite{EvertD19}, and, as a consequence, the subpencils of such a tensor have real valued generalized eigenvectors. In the noisy setting, however, it is possible that a subpencil of a perturbation $\cY=\cT+\cN$ has complex valued generalized eigenvectors.

For example, suppose $\cT \in \R^{R \times R \times K}$ is a slice mix invertible tensor of real rank $R$, and suppose $\fZ$ is a JGE space of dimension $2$. Let $\bZ \in \K^{R \times 2}$ be a  matrix whose columns form a basis for $\fZ$ and let $\cS$ be an MLSVD of the tensor $\cY  \tmprod{2} \bZ^\teT$. Then $\cS$ is a $2 \times 2 \times 2$ tensor. In the noiseless setting (i.e., if $\cY=\cT$), if $\cS$ is slice mix invertible, then $\cS$ necessarily has a real rank $2$ and both generalized eigenvectors of $\cS$ are real valued. However, with sufficient noise, it is possible that $\cS$ has real rank $3$. This corresponds to the case where the generalized eigenvectors of $\cS$ are complex valued.

If $\cS$ has rank $3$, then $\cS$ does not have a best real rank $2$ approximation \cite{SL08}, so $\cS$ cannot be further decomposed and the true real rank $R$ decomposition of the original tensor cannot be computed by the GESD algorithm at this point. However, the factor matrix $\bA$ (resp. $\bC$) in the CPD for the original tensor will have two columns which (approximately) lie in the mode-$1$ (resp. mode-$3$) subspace of the tensor $\cY \cdot_2 \bZ^{\text{T}}$.

\footnotetext{When working over $\C$ there is no natural way to sort the generalized eigenvalues. As a consequence one must compute the chordal distance between all pairs of generalized eigenvalues in order to cluster the generalized eigenvalues into well-separated collections.}

At this point one may accept that these particular columns of $\bA$ and $\bC$ cannot be computed and replace them with bases for the two-dimensional subspaces to which they belong. Alternatively, one could use the results from the GESD algorithm (with the bases for these two-dimensional subspaces used as those two columns for $\bA$ and $\bC$) as an initialization { in an optimization routine for computing the full CPD}. While it is unlikely that these basis vectors are the true columns of $\bA$ or $\bC$, the optimization routine will at least be initialized with these columns in the correct two-dimensional subspace which is a significant improvement over a random initialization.

\section{ A deterministic bound for distinct eigenvalue clusters}
\label{sec:DetermBound}
Our main goal in this section is to provide a deterministic bound for the existence of a subpencil of the tensor of interest which has two distinct clusters of generalized eigenvalues which cannot have coalesced, a notion we now explain.

Suppose $\cT \in \K^{R \times R \times 2}$ is a matrix pencil with distinct generalized eigenvalues. Then for sufficiently small perturbations $\cN$, the generalized eigenvalues and generalized eigenvectors of $\cT+\cN$ are continuous in $\cN$. However, if the perturbation $\cN$ is large enough so that the matrix pencil $\cT+\cN$ has a repeated generalized eigenvalue, i.e., so that at least two generalized eigenvalues of $\cT+\cN$ have coalesced, then the path of corresponding generalized eigenvectors  cannot be unambiguously continued, as after the generalized eigenvalues separate there is ambiguity in which generalized eigenvalue corresponds to which generalized eigenvector. 

Although it is no longer possible to  unambiguously continue the paths of generalized eigenvectors themselves after generalized eigenvalues have coalesced, the path of the corresponding generalized eigenspace is still continuous  and unambiguous in $\cN$ so long as the underlying generalized eigenvalues do not coalesce with other generalized eigenvalues. Thus guaranteeing the existence of two distinct clusters of generalized eigenvalues which cannot have coalesced guarantees that $\cT+\cN$ has at least two distinct generalized eigenspaces which are still continuous in the perturbation $\cN$. These two generalized eigenspaces are still closely related to the original generalized eigenspaces of $\cT$. This idea is formalized for matrix pencils in \cite{DK87}. Also see \cite{Kato95} for the matrix case.

We note that the bound presented in this section can not only guarantee the existence of clusters of generalized eigenvalues which have not coalesced, but can also guarantee a minimum separation between the clusters of generalized eigenvalues. As previously mentioned, the separation between clusters of generalized eigenvalues heavily impacts the  accuracy of computing corresponding generalized eigenspaces, so in this way our bounds have significant implications for the accuracy of a given step of the GESD algorithm.

When discussing  our deterministic bound we restrict to the real setting. That is, in this section $\cT \in \R^{R \times R \times K}$ is a slice mix invertible real tensor which has real rank $R$.

We now introduce some notation. Given a matrix $\bM$, let $\sigma_{\min}(\bM)$ denote the smallest singular value of $\bM$. Define the \df{Grassmannian} $\Gr(1,\K^2)$ to  be the collection of subspaces of $\K^2$ of dimension $1$. Note that any element of $\Gr(1,\R^2)$ may be uniquely represented in the form $\spn (\cos(\theta),\sin(\theta))$ for some $\theta \in [0,\pi)$. Thus given a subspace $\bLam \in \Gr(1,\R^2)$ we define
\[
{\theta(\bLam) \in [0,\pi) \qquad  \mathrm{by } \qquad \bLam= \spn\Big(\big(\cos(\theta(\bLam)),\sin(\theta(\bLam))\big)\Big). }
\]
Additionally, for a nonzero vector $\blam \in \R^2$, define 
\[
\theta (\blam)= \theta (\spn(\blam)).
\]

The metric we use for the Grassmannian is the chordal metric. Given two subspaces $\bLam_1,\bLam_2 \in \Gr(1,\K^2)$ define the \df{chordal metric} between $\bLam_1$ and $\bLam_2$, denoted $\chi(\bLam_1,\bLam_2)$ to be the sine of the (principal) angle between $\bLam_1$ and $\bLam_2$. It is straightforward to check that $\Gr(1,\K^2)$ is a path connected metric space under the chordal metric. We extend the definition of $\chi$ to $\K^2\backslash \{0\}$ by identifying nonzero vectors in $\K^2$ with their spans. That is, given $\blam_1,\blam_2 \in \K^2 \backslash \{0\}$ we have
\[
\chi (\blam_1,\blam_2):= \chi(\spn(\blam_1),\spn(\blam_2)).
\]
See \cite{YL16} for further discussion of the chordal metric and distance in Grassmanians.

Lastly, for a tensor $\cT \in K^{R \times R \times K}$, define the \df{spectral norm} of $\cT$, denoted $\spnorm{\cT}$  by
\[
\|\cT\|_{\mathrm{sp}}:= \max_{\|\bx\|_2=\|\by\|_2=\|\bz\|_2=1} |\cT \cdot_1 \bx^\teT \cdot_2 \by^\teT \cdot_3 \bz^\teT|
\]
where the vectors $\bx,\by,\bz$ are of compatible dimension. The spectral norm of $\cT$ is equal to the Frobenius norm of a best rank $1$ approximation of $\cT.$ See \cite{FL18,FMPS13,FO14,QHX21} for further discussion.

Our first step towards a bound for an $R \times R \times K$ tensor to have a subpencil in which generalized eigenvalues have not coalesced is to give a bound which guarantees that a matrix pencil (i.e., an $R \times R \times 2$ tensor) has generalized eigenvalues which have not coalesced.

\begin{theorem}
\label{theorem:RegPertJEigBound}
Let $\cT$ and $\cY$ be tensors of size $R \times R \times 2$. Assume that $\cT$  is slice mix invertible and has real rank $R$ with CPD $\cT = \cpd$, where the columns of $\bC$ are normalized to have unit norm.

Let $\{\spn(\blam_1), \dots, \spn(\blam_R)\}$ be the  generalized eigenvalues of $\cT$ where the indices are ordered so that 
\[
0 \leq \theta (\blam_r) \leq \theta (\blam_{r+1}) < \pi \quad \mathrm{for } \quad r=1,\dots, R-1,
\]
and assume at least $J$ of the $\blam_r$ are distinct. Define $\blam_{R+1}=\blam_1$ and let $\delta$ be the $J$th largest element of the set
\[
\left\{ \frac{\chi(\blam_r,\blam_{r+1})}{2}
\right\}_{r=1}^R.
\]
Set $\epsilon_1= \sigma_{\min} (\bA) \sigma_{\min}(\bB) \delta$  and let $\epsilon_2$ be the distance in the spectral norm from $\cT$ to the nearest pencil which is not slice mix invertible. If
\beq
\label{eq:DistanceReq}
\spnorm{\cT-\cY} < \min\{\epsilon_1,\epsilon_2\},
\eeq
then the tensor $\cY$ has at least $J$ distinct { (possibly complex valued)} generalized eigenvalues. 

Furthermore, if equation \eqref{eq:DistanceReq} is satisfied and one sets $\spnorm{\cT-\cY}  = \epsilon$, then the generalized eigenvalues of $\cY$ may be divided into $J$ nonempty sets $
\{\mathscr{L}^j\}_{j=1}^J$ 
such that
\beq
\label{eq:GenEigSetGaps}
\min_{j \neq \ell} \chi (\mathscr{L}^j,\mathscr{L}^\ell) \geq 2\delta-\frac{2\epsilon}{\sigma_{\min}(\bA) \sigma_{\min}(\bB)}  > 0. 
\eeq

\end{theorem}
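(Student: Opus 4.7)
My plan is to bound the smallest singular value of the pencil-combination matrix $\lambda_2\bY_1-\lambda_1\bY_2$ via the CPD structure of $\cT$, apply Weyl's inequality to exclude a region of $\mathrm{Gr}(1,\K^2)$ from hosting eigenvalues of $\cY$, and then use the homotopy $\cT_t=(1-t)\cT+t\cY$ to track eigenvalues from $\cT$ to $\cY$ and show that they remain in clusters around the original $\blam_r$.

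The key identity is that, since $\bT_k=\bA\,\mathrm{diag}(\bC(k,:))\,\bB^{\teT}$, for any unit-norm $\blam=(\lambda_1,\lambda_2)$ one has
\[
\lambda_2\bT_1-\lambda_1\bT_2\;=\;\bA\,\mathrm{diag}\!\big(\lambda_2 c_{r,1}-\lambda_1 c_{r,2}\big)_{r=1}^R\,\bB^{\teT},
\]
and the Lagrange identity gives $|\lambda_2 c_{r,1}-\lambda_1 c_{r,2}|=\chi(\blam,\blam_r)$ (the complex case uses the Hermitian formula $\chi(\bu,\bv)=\sqrt{1-|\langle\bu,\bv\rangle|^2}$). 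Invertibility of $\bA,\bB$ then yields
\[
\sigma_{\min}(\lambda_2\bT_1-\lambda_1\bT_2)\;\geq\;\sigma_{\min}(\bA)\sigma_{\min}(\bB)\,\min_{r}\chi(\blam,\blam_r).
\]
Setting $\cN=\cY-\cT$, $\epsilon=\spnorm{\cN}$, and noting that $(\lambda_2,-\lambda_1)^\teT$ is a unit vector, the definition of the spectral norm of a pencil gives $\|\lambda_2\bN_1-\lambda_1\bN_2\|_{\mathrm{op}}\leq\epsilon$ (the complex-$\blam$ version reduces to the real one by passing to real and imaginary parts and using the equality of real and complex spectral norms for real tensors). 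Weyl's inequality therefore implies that $\lambda_2\bY_1-\lambda_1\bY_2$ is nonsingular whenever $\min_r\chi(\blam,\blam_r)>\eta:=\epsilon/(\sigma_{\min}(\bA)\sigma_{\min}(\bB))$, and the hypothesis $\epsilon<\epsilon_1$ forces $\eta<\delta$. Consequently, every generalized eigenvalue of $\cY$ lies within chordal distance $\eta$ of some $\blam_r$.

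Since $\delta$ is the $J$th largest half-gap, the $J$ largest cyclic chordal gaps $\chi(\blam_r,\blam_{r+1})$ strictly exceed $2\eta$, so the chordal $\eta$-neighborhoods of the $\blam_r$ fall into at least $J$ pairwise disjoint clusters. To distribute the eigenvalues of $\cY$ among these clusters I would use the homotopy $\cT_t=(1-t)\cT+t\cY$ on $t\in[0,1]$; the hypothesis $\epsilon<\epsilon_2$ guarantees that every $\cT_t$ is slice mix invertible, hence a regular pencil whose $R$ generalized eigenvalues (counted with multiplicity) depend continuously on $t$ as roots of the bivariate homogeneous characteristic form $\det(\lambda_2(\bT_t)_1-\lambda_1(\bT_t)_2)$. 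Applying the singular-value argument uniformly to each $\cT_t$ confines its eigenvalues to the smaller $\eta_t=t\eta\leq\eta$ neighborhoods, so no continuous eigenvalue path can cross between clusters. Defining $\mathscr{L}^j$ as the eigenvalues of $\cY$ whose path ended in the $j$th cluster then yields a partition into $J$ nonempty sets (each seeded at $t=0$ by the $\blam_r$ living in that cluster), giving at least $J$ distinct generalized eigenvalues of $\cY$. The bound \eqref{eq:GenEigSetGaps} follows by two applications of the triangle inequality for the chordal metric: a perturbed eigenvalue in $\mathscr{L}^j$ is within $\eta$ of some $\blam_r$ in its cluster, and two such $\blam_r$ in different clusters are chordally separated by at least $2\delta$.

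The main technical obstacles are the homotopy/continuity step—verifying that the generalized eigenvalues admit genuine continuous paths throughout $[0,1]$ and that multiplicity accounting prevents any cluster from being accidentally emptied—together with the careful handling of the complex case in the singular value bound. Both rest on the regularity of every $\cT_t$ (provided by $\epsilon_2$) and on the uniform cluster-disjointness established above, together with the standard continuous dependence of the roots of a nonzero bivariate homogeneous polynomial on its coefficients.
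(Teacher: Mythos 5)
Your overall strategy—bounding $\sigma_{\min}(\lambda_2\bT_1 - \lambda_1\bT_2)$ via the explicit CPD factorization, applying Weyl's inequality to exclude a neighborhood-complement from hosting generalized eigenvalues of $\cY$, and then tracking the eigenvalues of $\cY$ through the homotopy $\cT_t = (1-t)\cT + t\cY$—is a genuinely different and more self-contained route than the paper's. The paper instead introduces Bauer--Fike balls and regions, proves they are path connected, establishes a complex-to-real reduction (its Proposition~\ref{proposition:BFrestrictR}), cuts $\Gr(1,\R^2)$ with bisector directions $\boldsymbol{\eta}_j$, and delegates the eigenvalue-counting-per-region step to a cited result \cite[Theorem 2.8]{EvertD19}. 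Your homotopy replaces both the connectedness lemma and the cited counting theorem with a direct root-continuity argument; this is an attractive simplification if it can be made rigorous.

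However, there is a genuine gap in the step where you extend the bound $\|\lambda_2\bN_1 - \lambda_1\bN_2\|_{\mathrm{op}} \leq \spnorm{\cN}$ from real to complex unit vectors $\blam$. You justify this by asserting ``equality of real and complex spectral norms for real tensors,'' but that equality is false even for $2\times 2\times 2$ real tensors. Concretely, take $\cN \in \R^{2\times 2\times 2}$ with frontal slices
\[
\bN_1 = \begin{pmatrix} 1 & 0 \\ 0 & 1 \end{pmatrix}, \qquad \bN_2 = \begin{pmatrix} 0 & 1 \\ -1 & 0 \end{pmatrix}.
\]
For any real unit $\bz$ the matrix $z_1\bN_1 + z_2\bN_2$ is a scaled rotation with operator norm $1$, so $\spnorm{\cN} = 1$. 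But for $\blam = (1,i)/\sqrt{2}$ one has
\[
\lambda_2\bN_1 - \lambda_1\bN_2 = \frac{1}{\sqrt{2}}\begin{pmatrix} i & -1 \\ 1 & i \end{pmatrix},
\]
whose largest singular value is $\sqrt{2}$. (Equivalently, taking $\bu=\bv=(1,i)/\sqrt{2}$ shows the complex spectral norm of $\cN$ equals $\sqrt{2}$.) Thus $\|\lambda_2\bN_1 - \lambda_1\bN_2\|_{\mathrm{op}} = \sqrt{2} > 1 = \spnorm{\cN}$, so the Weyl step cannot be applied at complex directions with only the bound you claim. Since the generalized eigenvalues of a real perturbation $\cY$ of $\cT$ may well be complex---a possibility the theorem explicitly allows for---the confinement of the spectrum of $\cY$ to the chordal $\eta$-neighborhoods of the $\blam_r$ is not established, and the homotopy argument built on top of it is left without a foundation. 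The naive fix (bounding by $|\lambda_2|\|\bN_1\| + |\lambda_1|\|\bN_2\| \leq \sqrt{2}\spnorm{\cN}$) costs a factor $\sqrt{2}$ and would weaken $\epsilon_1$. The paper sidesteps precisely this issue: it reduces questions about complex Bauer--Fike regions to real ones geometrically (Proposition~\ref{proposition:BFrestrictR}) rather than through a norm inequality, and the Bauer--Fike confinement result it uses is imported from \cite[Theorem 2.8]{EvertD19}, where the complex case is handled directly. To repair your proposal you would need either a correct bound on $\|\lambda_2\bN_1 - \lambda_1\bN_2\|_{\mathrm{op}}$ for complex unit $\blam$ in terms of $\spnorm{\cN}$ (which does not hold with constant $1$), or to mirror the paper's strategy of bounding only at real directions and then arguing geometrically that the complex eigenvalue clusters inherit the separation established over $\Gr(1,\R^2)$.
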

\begin{remark}
See the upcoming Lemma \ref{lemma:Eps2lower} for discussion of how to compute a lower bound for the quantity $\epsilon_2$ appearing in Theorem \ref{theorem:RegPertJEigBound}. In the case $J=R$ one always has $\epsilon_1 \leq \epsilon_2$, see \cite[Proposition 4.4]{EvertD19}.
\end{remark}

The proof of Theorem \ref{theorem:RegPertJEigBound} follows a simple idea. Using perturbation theoretic results for a matrix pencil, one may show that there are at least $J$ distinct connected regions in $\Gr(1,\C^2)$ such that each region contains a generalized eigenvalue of $\cY$ and such that all generalized eigenvalues of $\cY$ are contained in the union of these disjoint regions. Roughly speaking, the ``real parts" of these regions lie between each of the $J$ largest chordal gaps between the ordered generalized eigenvalues of $\cT$. To formalize this idea we introduce the notion of a Bauer--Fike ball and a Bauer--Fike region around a given generalized eigenvalue of $\cT$.

 Let the tensor $\cT=\cpd$ and the generalized eigenvalues $\{\spn(\blam_r)\}_{r=1}^R$ be as in the statement of Theorem \ref{theorem:RegPertJEigBound}. In particular, $\cT$ is slice mix invertible with $\R$-rank $R$ and the columns of $\bC$ have unit norm. We define the (real or complex) \df{Bauer--Fike ball}, for the parameter $\epsilon$ around a given generalized eigenvalue $\spn(\blam_r)$ of $\cT$, denoted $\bfballK (\blam_r) \subset \Gr(1,\K^2)$, by
\[
\bfballK (\blam_r)=\left\{\spn(\bgam)\   \in \  \Gr(1,\K^2) : \bgam \in \K^2/\{0\} \mathrm{\ and\ }  \chi(\bgam,\blam_r) \leq \frac{\epsilon}{\sigma_{\min}(\bA) \sigma_{\min} (\bB)} \right\}.
\]

Having defined Bauer--Fike balls\footnote{For clarity we emphasize that a Bauer--Fike ball for the parameter $\epsilon$ around a generalized eigenvalue $\bLam$ of $\cT$ is simply the closed neighborhood in $\Gr(1,\K^2)$ of radius $\frac{\epsilon}{\sigma_{\min}(\bA) \sigma_{\min} (\bB)}$ centered at $\bLam$ . We introduce this notation to streamline the proof of the upcoming Proposition \ref{proposition:BFrestrictR}, and to clearly illustrate the connection between these sets.}, we define the Bauer--Fike region $\bfregK(\blam_r)$ to be the connected component of the union of Bauer--Fike balls for $\cT$ which contains $\spn(\blam_r)$. That is, we set 
\[
\mathscr{D}^{\K} = \cup_{r=1}^R \bfballK (\blam_r) \subset \Gr(1,\K^2),
\]
and let $\mathscr{D}^{\K} = \cup_{r=1}^{R'} \mathscr{C}_r^\K$ be a partitioning of $\mathscr{D}^\K$ into $R' \leq R$ disjoint connected components (in the topology induced by the chordal metric). The Bauer--Fike region $\bfregK(\blam_r)$ is the unique connected component of $\mathscr{D}^\K$ which contains $\spn(\blam_r)$.

For use in the proof of the upcoming Proposition \ref{proposition:BFrestrictR}, we also provide the following equivalent definition of a Bauer--Fike region. For each $N = 0,1,2,\dots$ define
\[
\bfballNplK (\blam_r)= \cup_{\bfballNK(\blam_r) \cap \bfballK(\blam_\ell) \neq \emptyset} \bfballK(\blam_\ell).
\]
That is, $\bfballNplK(\blam_r)$ is the union over $\ell$ of Bauer--Fike balls $\bfballK(\blam_\ell)$ such that $\bfballK(\blam_\ell)$ has nontrivial intersection with $\bfballNK(\blam_r)$. The Bauer--Fike region $\bfregK(\blam_r)$ is defined by
\[
\bfregK(\blam_r)=\cup_{N=1}^\infty \bfballNK(\blam_r).
\]
It is not difficult to show that there is an integer $N$ such that 
\[
\bfregK(\blam_r)= \bfballNK(\blam_r).
\]
In particular one can take $N=R$.

The complex Bauer--Fike regions for a given tensor $\cT$ are useful in that they give us control over the locations of the generalized eigenvalues of a perturbation of $\cT$. Assume $\epsilon$ is small enough so that $ \spnorm{\cT-\cY} \leq \epsilon $ implies that $\cY$ is slice mix invertible. Then following the argument in the proof of \cite[Theorem 3.5]{EvertD19} shows that the number of generalized eigenvalues of $\cY$ contained by each complex Bauer--Fike region for the parameter $\epsilon$ is equal to the number of overlapping complex Bauer--Fike balls which make up that complex Bauer--Fike region. That is, for a fixed index $r$, if there are exactly $m$ indices $\ell$ (counting the case $r=\ell$) such that
\[
\bfregC(\blam_r) \cap \bfballC(\blam_\ell) \neq \emptyset,
\]
and if $\spnorm{\cT-\cY} \leq \epsilon $, then the complex Bauer--Fike region $\bfregC(\blam_r) $ contains exactly $m$ of the generalized eigenvalues of the tensor $\cY$ counting algebraic multiplicity.

Our goal in the proof of Theorem \ref{theorem:RegPertJEigBound} will be to show that $\cT$ has at least $J$ distinct complex Bauer--Fike regions for the parameter $\epsilon$. {Following the above discussion shows that if} $\cT$ and $\cY$ satisfy the assumptions of Theorem \ref{theorem:RegPertJEigBound}, then $\cY$ has at least $J$ distinct clusters of generalized eigenvalues which cannot have coalesced in a perturbation from $\cT$ to $\cY$. To show that $\cT$ has $J$ distinct Bauer--Fike regions, we will make use of the fact that Bauer--Fike regions are connected subsets of $\Gr(1,\K^2)$.

\begin{lemma}
\label{lemma:BFballsConnected}
Let $\cT \in \R^{R \times R \times 2}$ be a slice mix invertible $\R$-rank $R$ tensor with generalized eigenvalues $\frak{L}=\{\blam_r\}_{r=1}^R$. Then for each $r$, the Bauer--Fike region $\bfregK(\blam_r) \subset \Gr (1,\K^2)$ is path connected in the topology on $\Gr(1,\K^2)$ given by the chordal metric.
\end{lemma}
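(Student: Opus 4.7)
The plan is to exploit the iterative construction $\bfregK(\blam_r) = \cup_{N=0}^\infty \bfballNK(\blam_r)$ and the fact that each Bauer--Fike ball itself is path connected, then chain the balls together.

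First I would verify that each individual Bauer--Fike ball $\bfballK(\blam_\ell)$ is path connected in $\Gr(1,\K^2)$. This is a purely geometric fact about the chordal metric on the Grassmannian. For $\K = \R$, the space $\Gr(1,\R^2)$ is homeomorphic to a circle (identifying $\spn(\cos\theta,\sin\theta)$ with $e^{2i\theta}$), and a closed chordal ball about $\spn(\blam_\ell)$ of radius at most $1$ is simply an arc, which is path connected. For $\K = \C$, the space $\Gr(1,\C^2)$ is homeomorphic to a sphere and closed chordal balls correspond to spherical caps, again path connected. In both cases one can write down an explicit path: travel along the geodesic in $\Gr(1,\K^2)$ from any given point in the ball to its center $\spn(\blam_\ell)$, which stays inside the ball because the chordal distance to $\spn(\blam_\ell)$ is monotonically decreasing along such a geodesic.

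Next I would proceed by induction on $N$ to show that $\bfballNK(\blam_r)$ is path connected for every $N \geq 0$. The base case $N=0$ is just the single ball $\bfballK(\blam_r)$, handled above. For the inductive step, recall that
\[
\bfballNplK(\blam_r) \;=\; \bigcup_{\bfballNK(\blam_r)\cap \bfballK(\blam_\ell)\neq \emptyset}\bfballK(\blam_\ell)
\]
is the union of $\bfballNK(\blam_r)$ (path connected by hypothesis) with a collection of Bauer--Fike balls $\bfballK(\blam_\ell)$, each of which is path connected by the base case and each of which shares at least one point with $\bfballNK(\blam_r)$. The standard fact that the union of two path connected sets with nonempty intersection is path connected, applied one ball at a time, then yields path connectedness of $\bfballNplK(\blam_r)$.

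Finally, since there are only $R$ generalized eigenvalues, the sequence $\bfballNK(\blam_r)$ stabilizes for some $N \leq R$ and $\bfregK(\blam_r) = \bfballNK(\blam_r)$ at that stage, so path connectedness of the region follows. I do not anticipate a serious obstacle; the only mildly delicate point is the geometric claim that closed chordal balls in $\Gr(1,\K^2)$ are path connected, but this can be settled by an explicit geodesic construction, or alternatively by noting that $\Gr(1,\K^2)$ with the chordal metric is a geodesic metric space in which metric balls are convex.
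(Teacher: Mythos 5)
Your proof is correct and follows essentially the same inductive argument on $N$ that the paper uses: the base case that each Bauer--Fike ball, being a closed chordal neighborhood, is path connected; the inductive step that the union of path-connected sets having nontrivial pairwise intersection with a path-connected set remains path connected; and the stabilization $\bfregK(\blam_r)=\bfballNK(\blam_r)$ for some finite $N$. Your additional justification that chordal balls in $\Gr(1,\K^2)$ are path connected (via the identification with a circle or sphere, or via a geodesic to the center) fills in a geometric detail the paper merely asserts, but the overall route is the same.
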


Another technical part of our proof will be to restrict from considering complex Bauer--Fike regions to considering real Bauer--Fike regions. We emphasize that the generalized eigenvalues of $\cY$ can only be guaranteed to lie in the union of the complex Bauer--Fike regions of $\cT$, thus it is necessary to use complex Bauer--Fike regions. However, complex Bauer--Fike regions are more difficult to work with due to a lack of any notion of ordering in $\Gr(1,\C^2)$,  hence our desire to reduce to working over $\R$.  The following proposition makes this reduction possible.

\begin{proposition}
\label{proposition:BFrestrictR}
Let $\cT \in \R^{R \times R \times 2}$ be a slice mix invertible tensor having real rank $R$. Let $\{\spn (\blam_r)\}_{r=1}^R$ be the set of generalized eigenvalues of $\cT$ with $\blam_r \in \R^2$ for each $r  = 1,\dots, R$. Then for any $r_1, r_2$ one has
\beq
\label{eq:ComplexIntIFFRealInt}
\bfballC (\blam_{r_1}) \cap \bfballC (\blam_{r_2}) = \emptyset \quad \mathrm{if \ and \ only \ if }  \quad \bfballR (\blam_{r_1}) \cap \bfballR (\blam_{r_2}) = \emptyset.
\eeq
Additionally,
\[
\bfregC (\blam_{r_1}) = \bfregC (\blam_{r_2}) \quad \mathrm{if \ and \ only \ if }  \quad \bfregR (\blam_{r_1}) = \bfregR (\blam_{r_2})
\]
It follows that the number of distinct complex Bauer--Fike regions for $\cT$ is exactly equal to the number of distinct real Bauer--Fike regions for $\cT$. 
\end{proposition}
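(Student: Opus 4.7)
My plan is to reduce everything to showing: for real vectors $\blam_{r_1},\blam_{r_2} \in \R^2$ and the common radius $\delta := \frac{\epsilon}{\sigma_{\min}(\bA)\sigma_{\min}(\bB)}$, the two closed chordal balls $\bfballR(\blam_{r_i}) \subset \Gr(1,\R^2)$ meet if and only if the corresponding balls $\bfballC(\blam_{r_i}) \subset \Gr(1,\C^2)$ meet. The easy direction comes from the fact that $\Gr(1,\R^2) \hookrightarrow \Gr(1,\C^2)$ is isometric on real $1$-dimensional subspaces: for real unit vectors $u_1,u_2$, the chordal distance $\sqrt{1 - |\langle u_1,u_2\rangle|^2}$ is the same whether the Hermitian inner product is computed in $\R^2$ or $\C^2$, so any real subspace lying in both real balls also lies in both complex balls.

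For the nontrivial direction, suppose $M \in \bfballC(\blam_{r_1}) \cap \bfballC(\blam_{r_2})$. I would choose a real orthonormal basis $\{u_1,u_1^{\perp}\}$ of $\R^2$ with $u_1$ a unit spanning vector of $\spn(\blam_{r_1})$, and write a unit spanning vector of $M$ as $au_1 + bu_1^{\perp}$ with $a,b \in \C$ and $|a|^2+|b|^2 = 1$. Letting $\alpha \in [0,\pi/2]$ denote the principal angle between $\spn(\blam_{r_1})$ and $\spn(\blam_{r_2})$, and choosing the sign of $u_1^{\perp}$ so that the unit spanning vector of $\spn(\blam_{r_2})$ equals $(\cos\alpha)u_1 + (\sin\alpha)u_1^{\perp}$, a direct computation using the Hermitian convention gives
\[
\chi(M,\blam_{r_1})^2 = |b|^2, \qquad \chi(M,\blam_{r_2})^2 = 1 - \cos^2\!\alpha\,|a|^2 - \sin^2\!\alpha\,|b|^2 - 2\cos\alpha\sin\alpha\,\re(a\bar b).
\]
Because $\re(a\bar b) \leq |a||b|$, with equality when $a$ and $b$ are real of matching sign, replacing $M$ by the real subspace $M' := \spn(|a|u_1 + |b|u_1^{\perp}) \in \Gr(1,\R^2)$ preserves $\chi(\cdot,\blam_{r_1})$ and does not increase $\chi(\cdot,\blam_{r_2})$. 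Hence $M'$ lies in both real balls, which proves \eqref{eq:ComplexIntIFFRealInt}.

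For the statement about regions, I would unfold the recursive definition $\bfregK(\blam_r) = \cup_{N \geq 1} \bfballNK(\blam_r)$ into the chain characterization: $\bfregK(\blam_{r_1}) = \bfregK(\blam_{r_2})$ if and only if there exist indices $r_1 = s_0,s_1,\ldots,s_n = r_2$ with $\bfballK(\blam_{s_i}) \cap \bfballK(\blam_{s_{i+1}}) \neq \emptyset$ for every $i$. Each link of such a chain depends only on whether a pair of individual Bauer--Fike balls intersects, and by the just-proved \eqref{eq:ComplexIntIFFRealInt} this relation coincides in the real and complex settings. Consequently chains exist in $\K = \R$ if and only if they exist in $\K = \C$, which yields the region-equality equivalence. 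The count of distinct Bauer--Fike regions, being the number of equivalence classes of the $\blam_r$ under the relation $\bfreg(\blam_{r_1}) = \bfreg(\blam_{r_2})$, therefore agrees in the real and complex cases.

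The main obstacle I expect is isolating the $\re(a\bar b)$ term and recognizing it as the unique place where the phase information of $M$ enters the two chordal distances simultaneously; once this is observed, the elementary bound $\re(a\bar b) \leq |a||b|$ immediately produces a real minimizer $M'$, and the remainder of the argument reduces to routine bookkeeping via the chain characterization of Bauer--Fike regions.
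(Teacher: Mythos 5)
Your proof is correct and structurally identical to the paper's: both reduce the region equivalence to the pairwise-ball statement via a chain argument (equivalently, induction on the $\bfballNK$ hierarchy), and both settle the nontrivial direction of \eqref{eq:ComplexIntIFFRealInt} by producing an explicit real subspace inside the intersection of the two complex Bauer--Fike balls. The only stylistic difference is the realification device: the paper gestures at the real angle bisector of $\spn(\blam_{r_1})$ and $\spn(\blam_{r_2})$ as a Chebyshev center and declares the verification straightforward, whereas you carry it out directly by stripping phases, $(a,b)\mapsto(|a|,|b|)$, and observing via $\re(a\bar b)\le|a||b|$ that this preserves $\chi(\cdot,\blam_{r_1})$ and cannot increase $\chi(\cdot,\blam_{r_2})$. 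Your version supplies the computation the paper omits and is arguably cleaner since it gives a real witness pointwise dominating the given complex one; otherwise the two arguments are the same.
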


\begin{proof}
Let $\cT$ and $\blam_{r_1}$ and $\blam_{r_2}$ be as in the statement of the lemma. It is clear that $\bfballC (\blam_{r_1}) \cap \bfballC (\blam_{r_2}) = \emptyset$ implies $\bfballR (\blam_{r_1}) \cap \bfballR (\blam_{r_2}) = \emptyset$. On the other hand, using the fact that a vector $\bzeta$ which minimizes the chordal distance to a set of real vectors $\{\blam_1,\blam_2\}$ may be obtained by taking $\bzeta$ to be a real vector that bisects the angle between $\blam_1$ and $\blam_2$, it is straightforward to show that if $\bfballC (\blam_{r_1}) \cap \bfballC (\blam_{r_2}) \neq \emptyset$, then $\bfballR (\blam_{r_1}) \cap \bfballR (\blam_{r_2}) \neq \emptyset$. In particular, if $\bfballC (\blam_{r_1}) \cap \bfballC (\blam_{r_2}) \neq \emptyset$, then 
\[
\spn(\bzeta) \in \bfballC (\blam_{r_1}) \cap \bfballC (\blam_{r_2}) \mathrm{\ hence\ } \spn(\bzeta) \in \bfballR (\blam_{r_1}) \cap \bfballR (\blam_{r_2}).
\]
Thus we have $\bfballC (\blam_{r_1}) \cap \bfballC (\blam_{r_2}) = \emptyset$ if and only if $\bfballR (\blam_{r_1}) \cap \bfballR (\blam_{r_2}) = \emptyset$.

Using equation \eqref{eq:ComplexIntIFFRealInt} together with the above, one may show by induction that for all $N \in \N$ and all $r_1,r_2 \in \{1,\dots,R\}$ one has
\beq
\label{eq:CcontainIFFRcontain}
\bfballC(\blam_{r_2}) \subset \bfballNC(\blam_{r_1}) \quad \mathrm{if \ and \ only \ if }  \quad \bfballR(\blam_{r_2}) \subset \bfballNR(\blam_{r_1}).
\eeq
Furthermore, it is straightforward to check that 
\[
\bfregK(\blam_{r_1}) = \bfregK(\blam_{r_2})
\]
if and only if there is an integer $N$ such that
\[
\bfballK(\blam_{r_2}) \subset \bfballNK(\blam_{r_1}).
\]
Combining this fact with equation \eqref{eq:CcontainIFFRcontain} completes the proof.
\end{proof}

\subsection{Proof of Theorem \ref{theorem:RegPertJEigBound}}

We have now laid the necessary groundwork to give the proof of Theorem \ref{theorem:RegPertJEigBound}. The proof is technical, but, as previously mentioned, the main idea behind the proof is simple. Before giving technical details, we give an overview of the role played by Bauer--Fike regions. 

As previously discussed, each distinct Bauer--Fike region for $\cT$ contains a generalized eigenvalue of $\cY$ and all the generalized eigenvalues of $\cY$ are contained in some Bauer--Fike region of $\cT$. This fact is proved in the tensor spectral norm by \cite[Theorem 4.3]{EvertD19}; however, in a different norm (which upper bounds the tensor spectral norm) this fact is classical, e.g. see \cite[Chapter VI, Theorem 2.7]{SS90}. By using this fact together with Proposition \ref{proposition:BFrestrictR} to show that $\cY$ has at least $J$ distinct generalized eigenvalues, it is sufficient to show that $\cT$ has $J$ distinct real Bauer--Fike regions. 

To show that $\cT$ has at least $J$ distinct real Bauer--Fike regions, we divide $\Gr(1,\R^2)$ into $J$ distinct regions such that each region fully contains a real Bauer--Fike region of $\cT$. The desired division of $\Gr(1,\R^2)$ into distinct regions is accomplished by using subspaces of $\R^2$ to cut in half each of the $J$ largest gaps between the ordered generalized eigenvalues of $\cT$.

 Our choice of parameter for our Bauer--Fike balls will guarantee that these cuts cannot be contained in any Bauer--Fike ball of $\cT$, hence, these cuts cannot be contained in any Bauer--Fike region. The fact that a real Bauer--Fike region lying between two adjacent cuts is distinct from a real Bauer--Fike region lying between another pair of adjacent cuts follows from the path connectedness of the real Bauer--Fike regions. Details follow below.

\begin{proof}
Let $\cT=\cpd$ and $J$ and $\epsilon$ be as in the statement of Theorem \ref{theorem:RegPertJEigBound} and assume $J \geq 2$. We first show that $\cT$ has $J$ distinct complex Bauer--Fike regions. Using Proposition \ref{proposition:BFrestrictR}, it is sufficient to show that $\cT$ has $J$ distinct real Bauer--Fike regions.  

Now, from the ordering of our eigenvalues, there must exist $J$ distinct indices $1 \leq r_1 < \cdots < r_J \leq R$ such that for each $r_j \neq R$ there is a unit vector $\boldeta_j \in \R^2$ satisfying
\[
\theta(\blam_{r_j}) < \theta (\boldeta_j) < \theta(\blam_{r_j+1})
\]
and
\[
 \chi(\boldeta_j,\blam_{r_j})\geq \delta \mathand \chi(\boldeta_j,\blam_{{r_j}+1})  \geq \delta.
\]
Using $\epsilon_1=\sigma_{\min}(\bA) \sigma_{\min}(\bB) \delta$ and $\epsilon < \epsilon_1$ gives
\[
\chi(\boldeta_\ell,\blam_{r_\ell})> \frac{\epsilon}{\sigma_{\min}(\bA) \sigma_{\min}(\bB) } \mathand \chi(\boldeta_\ell,\blam_{{r_\ell}+1})  >  \frac{\epsilon}{\sigma_{\min}(\bA) \sigma_{\min}(\bB) }  .
\]
If the case $r_J = R$ occurs, then we may instead choose a unit vector $\boldeta_J$ such that
\[
 \chi(\boldeta_J,\blam_{R}) >\frac{ \epsilon}{\sigma_{\min}(\bA) \sigma_{\min}(\bB) } \mathand \chi(\boldeta_J,\blam_{1})  > \frac{\epsilon}{\sigma_{\min}(\bA) \sigma_{\min}(\bB) }
\]
and such that either 
\[
\theta(\blam_R) < \theta(\boldeta_J) \quad \quad \mathrm{{ or}} \quad \quad  \theta(\boldeta_J) < \theta(\blam_1).
\]
To simplify exposition, we assume that $\theta(\blam_R) < \theta(\boldeta_J) $.

Note that $\cup_{j=1}^J \spn(\boldeta_j)$ divides $\Gr(1,\R^2)$ into $J$ distinct open connected components. In particular, there exist $J$ disjoint open subsets $\mathscr{G}_1,\dots,\mathscr{G}_J$ of $\Gr(1,\R^2)$ such that
\[
\cup_{j=1}^J \mathscr{G}_j = \Gr(1,\R^2) \backslash \left(\cup_{j=1}^J \spn (\boldeta_j)\right).
\] 

Using the ordering of our generalized eigenvalues together with the definition of the $\boldeta_j$, a straightforward check shows that $\spn(\boldeta_j)$ cannot be contained in a real Bauer--Fike region $\bfregR (\blam_r)$  for any $r=1,\dots,R$. In particular we have
\beq
\label{eq:BFregsInConnectedComponents}
\cup_{r=1}^R \bfregR(\blam_r) \subset \left(\cup_{j=1}^J \mathscr{G}_j \right).
\eeq
Recalling that each Bauer--Fike region is connected and that $\mathscr{G}_j$ are open and disjoint sets, equation \eqref{eq:BFregsInConnectedComponents} then implies that if there are indices $r$ and $j$ such that
\beq
\label{eq:IntersectionImpliesContainment}
\bfregR(\blam_r) \cap \mathscr{G}_j \neq \emptyset, \qquad \text{then} \qquad  \bfregR(\blam_r) \subseteq \mathscr{G}_j.
\eeq

By construction we have
\[
\mathscr{G}_j \cap   \left(\cup_{r=1}^R \bfregR(\blam_r)\right)  \neq \emptyset \qquad \qquad \mathrm{for \ each \ } j=1,\dots,J.
\]
Combining this with equation \eqref{eq:IntersectionImpliesContainment} shows that each $\mathscr{G}_j$ contains some real Bauer--Fike region of $\cT$. In particular, $\cT$ has at least $J$ distinct real Bauer--Fike regions.  An application of Proposition \ref{proposition:BFrestrictR} then shows that $\cT$ has at least $J$ distinct complex Bauer--Fike regions. 

Now let $\cY \in \R^{R \times R \times 2}$ be a tensor which satisfies
\[
\spnorm{\cT-\cY}  \leq \epsilon < \min\{\epsilon_1,\epsilon_2\}.
\]
 Following the argument used to prove \cite[Theorem 2.8]{EvertD19} shows that for each $j=1,\dots,J$ the region  $\bfregC (\blam_{r_j})$ must contain at least one generalized eigenvalue of  $\cY$ and that all the generalized eigenvalues of $\cY$ are contained in some Bauer--Fike region. Therefore, since $\cT$ has at least $J$ distinct Bauer--Fike regions, the tensor $\cY$ has at least $J$ distinct generalized eigenvalues.

It remains to construct the nonempty sets $\{\mathscr{L}^j\}_{j=1}^J$ of generalized eigenvalues of $\cY$ which satisfy equation \eqref{eq:GenEigSetGaps}. To this end, let $\{\spn(\tblam_r)\}_{r=1}^R$ be the set of generalized eigenvalues of $\cY$ and for each $j=1,\dots,J$ define $\mathscr{L}^j$ by
\[
\mathscr{L}^j = \left\{ \spn (\tblam_\ell) \  : \  \tblam_\ell \in \{\tblam_r\}_{r=1}^R \cap \mathscr{G}_j \right\}.
\]
In words, $\mathscr{L}^j$ is the set of generalized eigenvalues of $\cY$ which are contained in $\mathscr{G}_j$. 

Now suppose $\tblam_m$ and $\tblam_n$ are generalized eigenvalues of $\cY$ such that $\tblam_m \in \mathscr{G}_i$ and $\tblam_n \in \mathscr{G}_\ell$ for some indices $i \neq \ell$. Then there must exist generalized eigenvalues $\blam_m$ and $\blam_n$ of $\cT$ such that $\blam_m \in \mathscr{G}_i$ and $\blam_n \in \mathscr{G}_\ell$ and 
\[
\chi(\blam_m,\tblam_m) \leq \frac{\epsilon}{\sigma_{\min} (\bA) \sigma_{\min} (\bB)} \qquad \text{and} \qquad \chi(\blam_n,\tblam_n) 
\leq \frac{\epsilon}{\sigma_{\min} (\bA) \sigma_{\min} (\bB)}.
\]
Furthermore, from the definition of the $\mathscr{G}_j$ we must have
\[
\chi(\blam_m,\blam_n) \geq 2\delta.
\]
Using the triangle inequality shows
\[
\chi(\tblam_m,\tblam_n) \geq \chi(\blam_m,\blam_n) - \chi(\blam_m,\tblam_m) - \chi(\blam_n,\tblam_n) \geq 2\delta - \frac{2\epsilon}{\sigma_{\min} (\bA) \sigma_{\min} (\bB)}
\]
from which inequality \eqref{eq:GenEigSetGaps} follows. 
\end{proof}

\subsection{Distinct eigenvalue clusters for subpencils of a tensor}
We now use Theorem \ref{theorem:RegPertJEigBound} to give the radius of a Frobenius norm ball around a given tensor in which every tensor is guaranteed to have a subpencil with at least $J$ distinct generalized eigenvalues. 

\begin{corollary}
\label{corollary:ComputedJthBound}
Let $\cT \in \R^{R \times R \times K}$ be a slice mix invertible tensor of real rank $R$. Let $\bU \in \R^{K \times K}$ be any orthogonal matrix. Set $\cS = \cT \cdot_3 \bU$. For each $k=1,\dots, \lfloor K/2 \rfloor$, let $\epsilon_k \geq 0$ be the bound obtained from Theorem \ref{theorem:RegPertJEigBound} which guarantees that matrix pencils in a ball of Frobenius radius $\epsilon_k$ around the subpencil
\[
(\bS_{2k-1},\bS_{2k})
\]
have at least $J$ distinct generalized eigenvalues. Here we take $\epsilon_k=0$ if the corresponding subpencil is not slice mix invertible. Set $\epsilon = ||(\epsilon_1, \dots, \epsilon_{ \lfloor K/2 \rfloor})||_2$. Then every tensor in  an open ball of Frobenius radius $\epsilon$ around $\cT$ has a subpencil which has at least $J$ distinct generalized eigenvalues. 
\end{corollary}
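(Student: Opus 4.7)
My plan is to reduce the claim to Theorem \ref{theorem:RegPertJEigBound} via a pigeonhole argument applied to the $\lfloor K/2 \rfloor$ disjoint pair-pencils extracted from $\cS$. The key observation is that mode-$3$ multiplication by an orthogonal matrix preserves Frobenius norm, and any perturbation of $\cT$ transports into a perturbation of $\cS$ of the same Frobenius size that can be distributed across disjoint pair-pencils.

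First, I fix an arbitrary $\cY$ with $\|\cT-\cY\|_F < \epsilon$ and set $\cS' = \cY \cdot_3 \bU$. Because $\bU$ is orthogonal, $\|\cS-\cS'\|_F = \|\cT-\cY\|_F < \epsilon$. For each $k=1,\dots,\lfloor K/2 \rfloor$, I let $\cP_k$ and $\cP'_k$ denote the subpencils of $\cS$ and $\cS'$ consisting of frontal slices $2k-1$ and $2k$. Since the index pairs $\{2k-1,2k\}$ are pairwise disjoint,
\[
\sum_{k=1}^{\lfloor K/2 \rfloor} \|\cP_k - \cP'_k\|_F^2 \;\leq\; \|\cS-\cS'\|_F^2 \;<\; \epsilon^2 \;=\; \sum_{k=1}^{\lfloor K/2 \rfloor} \epsilon_k^2,
\]
and a pigeonhole step produces an index $k_0$ satisfying $\|\cP_{k_0}-\cP'_{k_0}\|_F < \epsilon_{k_0}$. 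Using the general bound $\spnorm{\cdot}\leq \|\cdot\|_F$, this upgrades to $\spnorm{\cP_{k_0}-\cP'_{k_0}} < \epsilon_{k_0}$, at which point the defining property of $\epsilon_{k_0}$ furnished by Theorem \ref{theorem:RegPertJEigBound} guarantees that $\cP'_{k_0}$ has at least $J$ distinct generalized eigenvalues. To conclude, I verify that $\cP'_{k_0}$ is genuinely a subpencil of $\cY$: explicitly, $\cP'_{k_0} = \cY \cdot_3 \bV$ where $\bV \in \R^{2 \times K}$ is built from rows $2k_0-1$ and $2k_0$ of $\bU$, and these rows are linearly independent because $\bU$ is orthogonal, so $\bV$ has rank $2$ as required.

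There is no substantial obstacle in this argument; all the work has been done in Theorem \ref{theorem:RegPertJEigBound} and the role of the corollary is just to aggregate those pencil-level bounds into a single tensor-level bound. The only mild points to watch are the Frobenius-to-spectral passage (handled automatically by $\spnorm{\cdot}\leq\|\cdot\|_F$) and the degenerate case in which some pair-pencil $(\bS_{2k-1},\bS_{2k})$ fails to be slice mix invertible and therefore contributes $\epsilon_k=0$ to the $\ell^2$ sum; such indices cannot be selected by the pigeonhole step, so they neither help nor harm the argument.
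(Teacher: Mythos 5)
Your argument is correct and follows essentially the same route as the paper's proof: transfer the Frobenius perturbation to $\cS$ via orthogonal invariance, use the disjointness of the index pairs $\{2k-1,2k\}$ together with a pigeonhole/$\ell^2$ argument to find one pair-pencil perturbed by less than $\epsilon_{k}$ in Frobenius norm, pass to the spectral norm, and invoke Theorem \ref{theorem:RegPertJEigBound}. You spell out the sum-of-squares pigeonhole and the verification that the selected pencil is a genuine subpencil of $\cY$, which the paper leaves implicit, but there is no substantive difference.
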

\begin{proof}
Let $\cY \in \R^{R \times R \times K}$ be a tensor such that 
$\|\cT-\cY\|_\text{F} < \epsilon$
and set $\cP=\cY \cdot_3 \bU$. Then we have $\|\cS-\cP\|_\text{F}<\epsilon$. It follows that there must be some index $k \in \{1,\dots, \lfloor K/2 \rfloor\}$ such that 
\[
\spnorm{(\bS_{2k-1},\bS_{2k})-(\bP_{2k-1},\bP_{2k})}  \leq \| (\bS_{2k-1},\bS_{2k})-(\bP_{2k-1},\bP_{2k})\|_\text{F} < \epsilon_k,
\]
hence Theorem \ref{theorem:RegPertJEigBound} may be applied to the subpencil $(\bP_{2k-1},\bP_{2k})$ of $\cP$. In the above, the norm $\spnorm{(\bM_{1},\bM_{2})}$ is the tensor spectral norm of the tensor $\cM \in \R^{R \times R \times 2}$ whose frontal slices are $\bM_{1}$ and $\bM_{2}$.
\end{proof}

Theorem \ref{theorem:RegPertJEigBound} and Corollary \ref{corollary:ComputedJthBound} are similar to results appearing in \cite{EvertD19}; however, the results appearing here are more general and their proofs require additional consideration.

\subsection{Computation of the bounds appearing in Theorem \ref{theorem:RegPertJEigBound} and Corollary \ref{corollary:ComputedJthBound}}

\label{sec:ComputationOfBounds}

We now explain how to compute the bounds presented in this section. We begin by showing how to compute a lower bound for the quantity $\epsilon_2$ in Theorem \ref{theorem:RegPertJEigBound}.

\begin{lemma}
\label{lemma:Eps2lower}
Let $\cT \in \R^{R \times R \times K}$ and let $\cS$ be a tensor which is not slice mix invertible. Then 
\beq
\label{eq:Eps2lower}
\spnorm{\cT-\cS} \geq \max_{\substack{\bv \in \R^K\\ \|\bv\|_2=1}} \sigma_{\min} (\cT \cdot_3 \bv^\mathrm{T}).
\eeq 
\end{lemma}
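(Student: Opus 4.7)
The plan is to exploit the variational characterizations of the minimum singular value and of the tensor spectral norm, and then to invoke the standard Weyl-type perturbation bound for singular values.

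First I would observe that the hypothesis on $\cS$ is exactly that every slice-mix $\cS \cdot_3 \bv^{\teT}$, $\bv \in \R^K$, is a singular $R \times R$ matrix, i.e.\ $\sigma_{\min}(\cS \cdot_3 \bv^{\teT}) = 0$ for every unit $\bv$. Now fix any unit vector $\bv \in \R^K$ and set
\[
\bA_\bv \coloneqq \cT \cdot_3 \bv^{\teT}, \qquad \bB_\bv \coloneqq \cS \cdot_3 \bv^{\teT}.
\]
By the Weyl/Mirsky perturbation inequality for singular values one has
\[
|\sigma_{\min}(\bA_\bv) - \sigma_{\min}(\bB_\bv)| \;\leq\; \|\bA_\bv - \bB_\bv\|_2,
\]
where $\|\cdot\|_2$ denotes the matrix spectral norm. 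Since $\sigma_{\min}(\bB_\bv) = 0$, this reduces to
\[
\sigma_{\min}(\cT \cdot_3 \bv^{\teT}) \;\leq\; \|(\cT-\cS) \cdot_3 \bv^{\teT}\|_2.
\]

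Next I would bound the right-hand side by $\spnorm{\cT - \cS}$. Writing $\cE = \cT - \cS$, the variational characterization of the matrix spectral norm gives
\[
\|\cE \cdot_3 \bv^{\teT}\|_2 \;=\; \max_{\|\bx\|_2 = \|\by\|_2 = 1} \bigl|\cE \cdot_1 \bx^{\teT} \cdot_2 \by^{\teT} \cdot_3 \bv^{\teT}\bigr|.
\]
Because $\|\bv\|_2 = 1$, the triple on the right is admissible in the definition of $\spnorm{\cE}$, so
\[
\|\cE \cdot_3 \bv^{\teT}\|_2 \;\leq\; \max_{\|\bx\|_2 = \|\by\|_2 = \|\bz\|_2 = 1} \bigl|\cE \cdot_1 \bx^{\teT} \cdot_2 \by^{\teT} \cdot_3 \bz^{\teT}\bigr| \;=\; \spnorm{\cT - \cS}.
\]
Combining the two displays yields $\sigma_{\min}(\cT \cdot_3 \bv^{\teT}) \leq \spnorm{\cT - \cS}$ for every unit $\bv$. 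Taking the maximum over unit $\bv \in \R^K$ delivers \eqref{eq:Eps2lower}.

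There is no real obstacle: the argument is a two-line composition of the Weyl singular-value inequality with the obvious fact that the tensor spectral norm dominates each ``partial'' spectral norm obtained by plugging a single unit vector into one mode. The only small care needed is to notice that no attainment of the maximum over $\bv$ is required, since the inequality holds pointwise in $\bv$ before taking the max.
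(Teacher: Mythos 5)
Your proof is correct and is essentially the same argument as the paper's, just phrased by citing Weyl's singular-value perturbation inequality rather than unrolling it: the paper picks the maximizing unit vector $\bv_*$, extracts a unit null vector $\by_*$ of $\cS \cdot_3 \bv_*^{\teT}$, and bounds $\spnorm{\cT - \cS} \geq \|(\cT - \cS) \cdot_2 \by_*^{\teT} \cdot_3 \bv_*^{\teT}\|_2 = \|(\cT \cdot_3 \bv_*^{\teT}) \by_*\|_2 \geq \sigma_{\min}(\cT \cdot_3 \bv_*^{\teT})$, which is exactly the proof of the instance of Weyl you invoke. Both routes reduce to the same chain of inequalities, and your observation that attainment of the max is not needed is a small but real simplification.
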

\begin{proof}

Let $\bv_* \in \R^K$ be a unit vector which maximizes the right hand side of equation \eqref{eq:Eps2lower}. Since $\cS$ is not slice mix invertible, the matrix $\cS \cdot_3 \bv_*^\teT \in \R^{R \times R}$ has a nontrivial null space. It follows that there is a unit vector $\by_* \in \R^R$ such that $\cS \cdot_2 \by_*^\teT \cdot_3 \bv_*^\teT = \b0$. We then have
\[
\spnorm{\cT-\cS} \geq \|(\cT-\cS) \cdot_2 \by_*^\teT \cdot_3 \bv_*^\teT \|_2 = \|(\cT \cdot_3 \bv_*^\teT) \by_* \|_2 \geq \sigma_{\min} (\cT \cdot_3 \bv_*^{\teT})
\]
as claimed.
\end{proof}

While the quantity on the right hand side of inequality \eqref{eq:Eps2lower} is itself nontrivial to compute, one can for example use the lower bound 
\[
\sigma_{\min} (\cT \cdot_3 \bv^\mathrm{T}) \geq \max \{\sigma_{\min} (\bT_1),\dots,\sigma_{\min} (\bT_k)\}.
\]
We use this lower bound for $\epsilon_2$ in the numerical experiment presented in the upcoming Figure \ref{lowrank} and for almost all pencils observe that the quantity $\epsilon_1$ of Theorem \ref{theorem:RegPertJEigBound} is less than or equal to this lower bound, hence the computed bound is rarely affected in our experiments by using this lower bound. 

To compute the quantity $\epsilon_1$, one need only compute a CPD $\cT = \cpd$ of $\cT$ where the columns of $\bC$ are scaled to have unit norm. As shown in Theorem \ref{theorem:RegPertJEigBound}, the generalized eigenvalues of $\cT$ are then equal to the spans of the columns of the factor matrix $\bC$. A subtlety is that the quantity $\sigma_{\min} (\bA) \sigma_{\min} (\bB)$ depends on the scaling of the columns of $\bA$ and $\bB$. We observe good performance if we scale these factor matrices so that for all $r=1,\dots,R$ the norm of $r$th column of $\bA$ is equal to that of the $r$th column of $\bB$, and this is how we scale the factors in our experiments. Locally optimal scaling can be computed using an ALS approach for example. See the discussion in \cite[Section 4.4.1]{EvertD19} for more details. 

For a given unitary $\bU$, the bound presented in Corollary \ref{corollary:ComputedJthBound} may be computed by applying the preceding discussion to the appropriate subpencils of $\cT \cdot_3 \bU$. Computing a unitary which gives the optimal bound in Corollary \ref{corollary:ComputedJthBound} is a nontrivial problem which is related to the travelling salesman problem. See the discussion in \cite[Section 5.1]{EvertD19}.

In practice we randomly generate unitaries $\bU_1,\dots,\bU_L \in \R^{K \times K}$ for some chosen parameter $L$, then keep the maximum bound obtained by applying Corollary \ref{corollary:ComputedJthBound} to the tensors $\cT \cdot_3 \bU_1,\dots, \cT \cdot_3 \bU_L$. The random unitaries we consider have the form $\bQ \cdot \mathrm{diag}(\mathrm{sign}(\bR))$ where $\bQ$ and $\bR$ are obtained from the $\bQ \bR$ decomposition applied to randomly generated matrices with i.i.d.\ Gaussian entries. Here $\mathrm{sign} (\bM)$ applies the $\mathrm{sign}$ operator to each entry of the matrix $\bM$, while $\mathrm{diag} (\bM)$ is the diagonal matrix whose diagonal entries are equal to those of $\bM$. Note that generating unitaries in this method is equivalent to randomly sampling unitaries from the Haar (uniform) measure on the unitary group \cite{Mezz07}.

\section{GESD vs. GEVD accuracy and timing}
\label{sec:GESDPerformance}

We now compare both the accuracy and timing of GESD to GEVD where the GEVD solution is computed using the first two slices of the MLSVD of the tensor\footnote{ Similar to GESD, the average performance of GEVD benefits by using the first two slices of the MLSVD. However, this strategy is not always successful, see Section \ref{sec:BadMLSVDSlices}.}. To this end, we generate low-rank tensors and add noise with varying signal to noise ratios (SNRs). We then compute the CPD with both methods and compare the computation time and factor matrix error (\texttt{cpderr}) of the results. For a low-rank tensor generated from the factor matrices $\mathbf{A}$, $\mathbf{B}$ and $\mathbf{C}$, the factor matrix error of its estimated CPD $\cp{\hat{\mathbf{A}},\hat{\mathbf{B}},\hat{\mathbf{C}}}$ is defined as $\max\left(\frac{\Vert \mathbf{A} - \hat{\mathbf{A}} \Vert_\text{F}}{\Vert \mathbf{A}\Vert_\text{F}},\frac{\Vert \mathbf{B} - \hat{\mathbf{B}} \Vert_\text{F}}{\Vert \mathbf{B}\Vert_\text{F}},\frac{\Vert \mathbf{C} - \hat{\mathbf{C}} \Vert_\text{F}}{\Vert \mathbf{C}\Vert_\text{F}}\right)$, where columns of $\hat{\mathbf{A}}$, $\hat{\mathbf{B}}$ and $\hat{\mathbf{C}}$ have been optimally permuted and scaled to match the columns of $\mathbf{A}$,$\mathbf{B}$ and $\mathbf{C}$, respectively. The SNR of the tensor $\ten{T}$ and the noise $\ten{N}$ in dB is defined as $20\log_{10}\left(\frac{\Vert\ten{T}\Vert_\text{F}}{\Vert\ten{N}\Vert_\text{F}}\right)$. For each SNR, the median error and computation time are computed over 50 trials unless differently specified. The computation time is the time required to obtain the factor matrices $\hat{\mathbf{A}}$, $\hat{\mathbf{B}}$ and $\hat{\mathbf{C}}$ starting from the noisy tensor $\ten{T}+\ten{N}$. All experiments in this section and Section \ref{sec:CRboundPerformance} are performed on a machine with an Intel Core i7-6820HQ CPU at 2.70GHz and
16GB of RAM using MATLAB  R2020b and Tensorlab 3.0 \cite{VDSBL16}. 

We start with two easier problems where the tensor dimensions are considerably larger than the tensor rank. For a more challenging case, we then move on to tensors with factor matrices that have correlated  columns. Next, we examine the accuracy and speed of the GESD method compared to a state-of-the art optimization method.

We note that as a tuning parameter in GESD one may adjust the  chordal distance between generalized eigenvalues which is sufficiently large for splitting between the corresponding generalized eigenspaces. To a point, the accuracy of GESD may be improved by increasing the requested  chordal distance; however, increasing this chordal distance can decrease the speed of GESD. One could also choose to restrict the number of generalized eigenvalue clusters which GESD is allowed to separate in at each step. The optimal splitting strategy in terms of both timing and accuracy is a topic of further investigation. Furthermore, one could additionally consider the separation between computed generalized eigenspaces as a parameter for deciding if a particular split is acceptable.\footnote{An in-depth discussion of the sensitivity of generalized eigenspace computations to the separation between the corresponding generalized eigenspaces is given in \cite{DK87}. While the bounds discussed therein take into account both eigenvalue and eigenspace conditioning, they are expensive to compute and lead to major increases in runtime if used in GESD. This is of course undesirable if the GESD algorithm is used to quickly obtain a good initialization for an optimization algorithm. \cite{ED21} further explores this topic in the context of joint generalized eigenspaces.}  Unless otherwise stated, in the experiments in this section we use a splitting threshold of 0.2 chordal distance.

\subsection{Tensor dimension exceeds tensor rank}
\label{sec:exp-easy}

The following experiments resemble typical problems in signal processing, where the tensor dimensions are often orders of magnitude larger than the tensor rank. For the first experiment, the factor matrix entries of a $100\times 100\times 100$-tensor of rank $10$ are independently sampled from the standard normal distribution. The results are shown in Figure~\ref{easynormal}. In this setting, the GESD algorithm is marginally more accurate than GEVD, but the computational cost of GESD is about  three times that of GEVD. While GESD is more computationally expensive, the first step in this setting is to compress the starting tensor. The cost of this compression dominates the other parts of the computation, thus using GESD over GEVD leads  to about a 25 percent increase in the total run time of the algorithm. 

For the second experiment, the factor matrices are independently sampled from the uniform distribution on $[0,1]$ instead of the standard normal distribution. This setting is more challenging than the first since the columns of the factor matrices are less well-separated. In Figure~\ref{easyuniform}, we can see the accuracies and computation times of both methods. Compared to the case with normally distributed factor matrix entries, a higher SNR is needed to obtain a reasonable solution, but from an SNR of $0$ dB upwards,  the error of the GESD method is more than an order of magnitude smaller than the error for the GEVD method. That is, using GESD leads to a gain of over $20$ dB in accuracy. As in the previous experiment, the total cost of compression together with GESD is only slightly higher than compression with GEVD since compression is the most computationally expensive step.

\subsection{Factor matrices with correlated columns}
\label{sec:exp-cor}

We now consider the more challenging setting where the tensor of interest has rank equal to dimensions. That is, the tensors we consider in this section have rank $R$ and size $R \times R \times R$. This case is significantly more difficult than the $I_1 \times I_2 \times I_3$ rank $R$ case with $I_1,I_2,I_3\gg R$ which generally occurs in practice and is discussed in Section~\ref{sec:exp-easy}.  Recall that the first step in both GESD and GEVD is to compute a MLSVD of the tensor of interest. In the $I_1,I_2,I_3\gg R$ case, computing this MLSVD and reducing to an $R \times R \times R$ tensor typically significantly increases the SNR of the problem thereby reducing the difficulty of the problem. In addition, the compression generally takes much more time than computing the CPD of the small core tensor using GEVD or GESD.

In Figure \ref{cor} we consider the particularly difficult case where the columns of the factor matrices are highly correlated. More concretely, we consider the case where the first column of $\mathbf{A}$ is sampled from the uniform distribution on $[0,1]$ as before and each successive column of $\mathbf{A}$ is randomly sampled so that it forms a $10^\circ$ angle with $\mathbf{a}_{1}$. The other factor matrices are sampled analogously. Results for this case are shown in Figure~\ref{cor}. We note that, due to the difficulty of this setting, very high SNR is required before either GESD or GEVD is able to compute a meaningful solution.

For the problems in Figure~\ref{cor}, we see that GESD is significantly more accurate than GEVD. In fact, GESD is able to compute meaningful solutions starting at about an order of magnitude lower SNR than GEVD. This demonstrates that the handling of pencil specific eigenvalue clusters is better in GESD than in GEVD due to the fact that GESD is able to compute an eigenspace for the cluster, but delays computation of eigenvectors to a pencil in which the previously clustered eigenvalues are better separated. As before, the GESD algorithm takes about three times as long as the GEVD algorithm. 

In Figure \ref{asymp}, we investigate the speed and accuracy of both algorithms as a function of tensor rank. Here we run GESD and GEVD on noiseless tensors of rank $R$ and size $R \times R \times R$ where $R$ ranges from $50$ to $350$ and where the factor matrices are independently sampled from the uniform distribution on $[0,1]$.  For all values of $R$ we see that GESD is roughly two orders of magnitude more accurate than GEVD, but that GESD takes about three times as long to run as GEVD.

\subsection{Comparison with optimization}

As the GEVD algorithm can quickly compute a reasonably good fit for the CPD, its solution is often used as an initialization for an optimization algorithm. By providing a good initialization, the number of iterations of the more accurate yet more computationally expensive optimization algorithm can ideally be lowered significantly. In the next experiment, we compare the error and computation time of a nonlinear least-squares (NLS) algorithm~\cite{VD19} for three different initialization strategies: starting from the GEVD solution, the GESD solution or a random initial point. The optimization algorithm is run until either the relative cost function change or the relative factor matrix change drops under $10^{-8}$. The time to compute the GEVD or GESD solutions is included in the runtime of the optimization algorithm. 

In Figure~\ref{optimnocorsmall}, median values are shown over $20$ trials for a $10\times 10\times 10$-tensor of rank $10$ with factor matrices independently sampled from the uniform distribution on $[0,1]$. We can see that the NLS algorithm generally converges to a good solution for all three initialization strategies. The GEVD solution is notably less accurate than either a GESD or optimization based solution. In the right plot of Figure~\ref{optimnocorsmall}, it can be seen that the GESD method is about an order of magnitude faster than the optimization algorithm.  Also, the computation time of the optimization algorithm is significantly reduced by starting from the GEVD or GESD initialization, even when their computation time is included. For all SNRs, computation of a GESD or GEVD initialization followed by optimization is equally fast.

In Figure~\ref{optimcorsmall},  median values are shown over $20$ trials for a $10\times 10\times 10$-tensor of rank $10$ where the columns of the factor matrices are sampled with a fixed $10^\circ$ angle between them. For this more challenging problem, we can see from the left plot of Figure~\ref{optimcorsmall} that decent algebraic initializations are only achieved at higher SNR values compared to the uncorrelated case. Additionally we see that the SNR required for GESD to compute reasonably accurate initializations is significantly lower than that for GEVD. In this experiment, random initialization of the optimization algorithm does not lead to a meaningful solution at all, which again highlights the importance of algebraic initialization. As before, optimization with a GESD initialization and optimization with a GEVD initialization typically take the same amount of time.

In Figure~\ref{optimcorsmall20} we consider an even more difficult setting. Here median values are shown over $100$ trials for a $20\times 20\times 20$-tensor of rank $20$ where the columns of the factor matrices are sampled with a fixed $8^\circ$ angle between them. Due to the difficulty of this setting, meaningful solutions can only be computed with very high SNR. In the left plot of Figure~\ref{optimcorsmall20} we see both that GESD computes a more accurate algebraic initialization than GEVD and, moreover, that using the algebraic initialization from GESD for optimization leads to a more accurate CPD than using the algebraic initialization from GEVD. In addition we observe that GESD+optimization is faster than GEVD+optimization. As in the previous figure, optimization with random initialization does not lead to a meaningful solution. We emphasize that in Figure ~\ref{optimcorsmall20}, optimization following either GESD or GEVD ultimately stops due to insufficient advancement of the objective function. This illustrates that, in this problem, using GESD based initialization  indeed leads to better local minima than GEVD based initialization.

These experiments illustrate that using GESD as an algebraic initialization compared to GEVD does not cause any loss in terms of time or accuracy. In fact, GESD+optimization can be both more accurate and faster than GEVD+optimization. Furthermore see we that GESD based initializations are significantly more accurate than GEVD based initializations.  Thus using GESD over GEVD to initialize gives more reliable initializations without impacting total computation time.

\begin{figure}[h!]
	\centering    
%
%
\definecolor{mycolor1}{rgb}{0.00000,0.44700,0.74100}%
\definecolor{mycolor2}{rgb}{0.85000,0.32500,0.09800}%
\definecolor{mycolor3}{rgb}{0.92900,0.69400,0.12500}%
\begin{tikzpicture}

\begin{axis}[%
width=4.5cm,
height=2.5cm,
at={(0cm,0cm)},
scale only axis,
axis lines = left,
xmin=-22,
xmax=42,
xtick={-20,0,20,40},
xlabel={SNR (dB)},
ylabel={\texttt{cpderr}},
ytick={0.00001,0.0001,0.001,0.01,0.1,1},
ymode=log,
ymin=1e-4,
ymax=1,
yminorticks=true,
axis background/.style={fill=white},
legend style={legend cell align=left,align=left,draw=white!15!black}
]
\addplot [color=mycolor1,solid,line width=2.0pt,mark size=3.0pt,mark=asterisk,mark options={solid}]
  table[row sep=crcr]{%
-20	0.874545009195119\\
-15	0.468872131736895\\
-10	0.316198892949336\\
-5	0.118263371257622\\
0	0.0475584113618507\\
5	0.0251538705086156\\
10	0.0147488243530038\\
15	0.00743862775454093\\
20	0.00422722554914452\\
25	0.00243234676125892\\
30	0.00139195047434574\\
35	0.00074944533193058\\
40	0.000404860733943089\\
};
\addplot [color=mycolor2,solid,line width=2.0pt,mark size=3.0pt,mark=o,mark options={solid}]
  table[row sep=crcr]{%
-20	0.891529316961345\\
-15	0.488572713326469\\
-10	0.151676743804593\\
-5	0.0688856727050255\\
0	0.0353899901319866\\
5	0.0193341841931293\\
10	0.010772805530983\\
15	0.00607792792478353\\
20	0.00341144299260642\\
25	0.00191726771863074\\
30	0.00107511458739127\\
35	0.000602865392668655\\
40	0.000341568511297233\\
};
\node [mycolor1] at (470,-4){\textbf{GEVD}};
\node [mycolor2] at (420,-8){\textbf{GESD}};
\end{axis}
\begin{axis}[%
width=4.5cm,
height=2.5cm,
at={(6.5cm,0cm)},
scale only axis,
axis lines = left,
xmin=-22,
xmax=42,
xtick={-20,0,20,40},
xlabel={SNR (dB)},
ymin=0,
ymax=0.04,
ylabel={Time (s)},
yticklabel style={
	/pgf/number format/fixed,
	/pgf/number format/precision=4
},
scaled y ticks=false,
axis background/.style={fill=white},
legend style={legend cell align=left,align=left,draw=white!15!black}
]
\addplot [color=mycolor1,solid,line width=2.0pt,mark size=3.0pt,mark=asterisk,mark options={solid}]
  table[row sep=crcr]{%
-20	0.0047378\\
-15	0.0047916\\
-10	0.0047015\\
-5	0.00472005\\
0	0.0046985\\
5	0.00468405\\
10	0.00479225\\
15	0.0046099\\
20	0.00463505\\
25	0.0046135\\
30	0.00464615\\
35	0.0046483\\
40	0.0046224\\
};
\addplot [color=mycolor2,solid,line width=2.0pt,mark size=3.0pt,mark=o,mark options={solid}]
  table[row sep=crcr]{%
-20	0.0106587\\
-15	0.01224005\\
-10	0.0135287\\
-5	0.0136144\\
0	0.01347825\\
5	0.01413385\\
10	0.01370365\\
15	0.0134571\\
20	0.01337645\\
25	0.01354055\\
30	0.01362445\\
35	0.0136032\\
40	0.01353225\\
};
\addplot [color=mycolor3,solid,line width=2.0pt,mark size=3.0pt,mark=square,mark options={solid}]
  table[row sep=crcr]{%
-20	0.0271272\\
-15	0.0263936\\
-10	0.0264121\\
-5	0.0266516\\
0	0.02677975\\
5	0.0265397\\
10	0.0270025\\
15	0.0266301\\
20	0.0268298\\
25	0.02677475\\
30	0.0269448\\
35	0.0269011\\
40	0.02695685\\
};
\node [mycolor1] at (250,84){\textbf{GEVD}};
\node [mycolor2] at (250,200){\textbf{GESD}};
\node [mycolor3] at (250,330){\textbf{Compression}};
\end{axis}
\end{tikzpicture}%
	\caption{For normally distributed factor matrices, the  GESD and GEVD methods have similar performance with respect to the factor matrix accuracy. Median (over $50$ trials) \texttt{cpderr} (left) and computation time (right) for rank $10$ tensors of dimensions $100\times 100\times 100$ with factor matrices sampled from the standard normal distribution. The runtime of  GESD on the core tensor is about three times that of GEVD on the core; however overall runtime is dominated by the compression step.}
	\label{easynormal}
\end{figure}
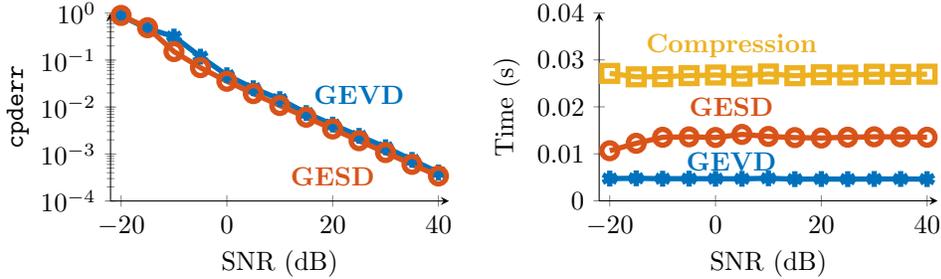

\begin{figure}[h!]
	\centering 
%
%
\definecolor{mycolor1}{rgb}{0.00000,0.44700,0.74100}%
\definecolor{mycolor2}{rgb}{0.85000,0.32500,0.09800}%
\definecolor{mycolor3}{rgb}{0.92900,0.69400,0.12500}%
\begin{tikzpicture}

\begin{axis}[%
width=4.5cm,
height=2.5cm,
at={(0cm,0cm)},
scale only axis,
axis lines = left,
xmin=-22,
xmax=42,
xtick={-20,0,20,40},
xlabel={SNR (dB)},
ylabel={\texttt{cpderr}},
ytick={0.00001,0.0001,0.001,0.01,0.1,1},
ymode=log,
ymin=1e-3,
ymax=1,
yminorticks=true,
axis background/.style={fill=white},
legend style={legend cell align=left,align=left,draw=white!15!black}
]
\addplot [color=mycolor1,solid,line width=2.0pt,mark size=3.0pt,mark=asterisk,mark options={solid}]
  table[row sep=crcr]{%
-20	0.951884617212015\\
-15	0.946311138344353\\
-10	0.942668387215704\\
-5	0.782640637869836\\
0	0.54803085122734\\
5	0.405731082494725\\
10	0.284775100975207\\
15	0.20676761663919\\
20	0.091202971356516\\
25	0.072784945157129\\
30	0.03609498856085\\
35	0.0188699213494627\\
40	0.0150309338911144\\
};
\addplot [color=mycolor2,solid,line width=2.0pt,mark size=3.0pt,mark=o,mark options={solid}]
  table[row sep=crcr]{%
-20	0.949120758537804\\
-15	0.945897469970095\\
-10	0.936766932302588\\
-5	0.776310666066095\\
0	0.485992378967873\\
5	0.128783990907458\\
10	0.0642893451427937\\
15	0.0334221845643899\\
20	0.0192065749268487\\
25	0.0103482408547231\\
30	0.00608537381526533\\
35	0.00338635284909589\\
40	0.00186977204728153\\
};
\node [mycolor1] at (470,-1){\textbf{GEVD}};
\node [mycolor2] at (420,-5.5){\textbf{GESD}};
\end{axis}
\begin{axis}[%
width=4.5cm,
height=2.5cm,
at={(6.5cm,0cm)},
scale only axis,
axis lines = left,
xmin=-22,
xmax=42,
xtick={-20,0,20,40},
xlabel={SNR (dB)},
ymin=0,
ymax=0.04,
ylabel={Time (s)},
yticklabel style={
	/pgf/number format/fixed,
	/pgf/number format/precision=4
},
scaled y ticks=false,
axis background/.style={fill=white},
legend style={legend cell align=left,align=left,draw=white!15!black}
]
\addplot [color=mycolor1,solid,line width=2.0pt,mark size=3.0pt,mark=asterisk,mark options={solid}]
  table[row sep=crcr]{%
-20	0.0047427\\
-15	0.0046742\\
-10	0.0049032\\
-5	0.00471865\\
0	0.0047399\\
5	0.00476895\\
10	0.00469105\\
15	0.0047249\\
20	0.00471265\\
25	0.0046415\\
30	0.00467695\\
35	0.0046991\\
40	0.0046824\\
};
\addplot [color=mycolor2,solid,line width=2.0pt,mark size=3.0pt,mark=o,mark options={solid}]
  table[row sep=crcr]{%
-20	0.0109119\\
-15	0.01066515\\
-10	0.01075875\\
-5	0.01190615\\
0	0.01550155\\
5	0.0165725\\
10	0.0169642\\
15	0.01746855\\
20	0.0169497\\
25	0.0170365\\
30	0.0169419\\
35	0.01690405\\
40	0.01721255\\
};
\addplot [color=mycolor3,solid,line width=2.0pt,mark size=3.0pt,mark=square,mark options={solid}]
  table[row sep=crcr]{%
-20	0.02873935\\
-15	0.0267441\\
-10	0.02731775\\
-5	0.02688335\\
0	0.0281202\\
5	0.02702965\\
10	0.026826\\
15	0.0270413\\
20	0.0282226\\
25	0.02736915\\
30	0.0267166\\
35	0.0267728\\
40	0.02674355\\
};
\node [mycolor1] at (260,81){\textbf{GEVD}};
\node [mycolor2] at (260,212){\textbf{GESD}};
\node [mycolor3] at (260,350){\textbf{Compression}};
\end{axis}
\end{tikzpicture}%
	\caption{For uniformly distributed factor matrices, the GESD method obtains a higher accuracy for the estimated factor matrices than the GEVD method. Median (over $50$ trials) \texttt{cpderr} (left) and computation time (right) for rank $10$ tensors of dimensions $100\times 100\times 100$ with factor matrices sampled from the uniform distribution on $[0,1]$.}
	\label{easyuniform}
\end{figure}
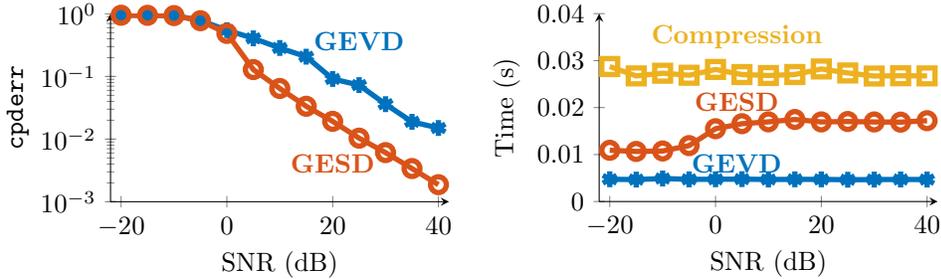

\begin{figure}
	\centering   
%
%
\definecolor{mycolor1}{rgb}{0.00000,0.44700,0.74100}%
\definecolor{mycolor2}{rgb}{0.85000,0.32500,0.09800}%
\begin{tikzpicture}

\begin{axis}[%
width=4.5cm,
height=2.5cm,
at={(0cm,0cm)},
scale only axis,
axis lines = left,
xmin=58,
xmax=122,
xlabel={SNR (dB)},
ylabel={\texttt{cpderr}},
ymode=log,
ymin=2e-03,
ymax=1,
ytick={0.0001,0.001,0.01,0.1,1},
yminorticks=true,
axis background/.style={fill=white},
legend style={legend cell align=left,align=left,draw=white!15!black}
]
\addplot [color=mycolor1,solid,line width=2.0pt,mark size=3.0pt,mark=asterisk,mark options={solid}]
  table[row sep=crcr]{%
40	0.725726427239562\\
45	0.664320462991191\\
50	0.599013817156048\\
55	0.52791318317128\\
60	0.470359490693395\\
65	0.456861974252238\\
70	0.400394652287972\\
75	0.389356636161652\\
80	0.351129367967178\\
85	0.349814527427354\\
90	0.32264237714701\\
95	0.27321863733608\\
100	0.196468201789664\\
105	0.122898151265776\\
110	0.101471828069069\\
115	0.0318934354661484\\
120	0.015251093158543\\
};

\addplot [color=mycolor2,solid,line width=2.0pt,mark size=3.0pt,mark=o,mark options={solid}]
  table[row sep=crcr]{%
40	0.733893569807443\\
45	0.651810101440347\\
50	0.585941445510382\\
55	0.514693400489467\\
60	0.446942202211805\\
65	0.412490793656016\\
70	0.368473084373307\\
75	0.349561103588612\\
80	0.331756995593841\\
85	0.315675762498562\\
90	0.241885176029033\\
95	0.129412376971729\\
100	0.0387248666039223\\
105	0.0206170702617332\\
110	0.0119658287792394\\
115	0.00684590016547395\\
120	0.00296221249201146\\
};
\node [mycolor1] at (650,-0.5){\textbf{GEVD}};
\node [mycolor2] at (550,-4.5){\textbf{GESD}};
\end{axis}
\begin{axis}[%
width=4.5cm,
height=2.5cm,
at={(6.5cm,0cm)},
scale only axis,
axis lines = left,
xmin=58,
xmax=122,
xlabel={SNR (dB)},
ymin=0.001,
ymode = log,
ymax=0.1,
ylabel={Time (s)},
axis background/.style={fill=white},
legend style={legend cell align=left,align=left,draw=white!15!black}
]
\addplot [color=mycolor1,solid,line width=2.0pt,mark size=3.0pt,mark=asterisk,mark options={solid}]
  table[row sep=crcr]{%
40	0.0054697\\
45	0.00492835\\
50	0.00493695\\
55	0.0048623\\
60	0.0048719\\
65	0.0049409\\
70	0.00489465\\
75	0.00497585\\
80	0.0050159\\
85	0.00488985\\
90	0.00491445\\
95	0.0049181\\
100	0.0047721\\
105	0.0048123\\
110	0.00491805\\
115	0.00477735\\
120	0.00483375\\
};

\addplot [color=mycolor2,solid,line width=2.0pt,mark size=3.0pt,mark=o,mark options={solid}]
  table[row sep=crcr]{%
40	0.01182355\\
45	0.01230345\\
50	0.01298225\\
55	0.01535075\\
60	0.0173893\\
65	0.0174805\\
70	0.0181095\\
75	0.01930355\\
80	0.0184852\\
85	0.0192056\\
90	0.02089\\
95	0.02019905\\
100	0.0203673\\
105	0.0204298\\
110	0.0199924\\
115	0.0199972\\
120	0.02037025\\
};
\node [mycolor1] at (280,-4.8){\textbf{GEVD}};
\node [mycolor2] at (280,-3.4){\textbf{GESD}};
\end{axis}

\end{tikzpicture}%
	\caption{For factor matrices with correlated columns, the GESD method performs significantly better than the GEVD method with respect to the factor accuracy. Median (over $50$ trials) \texttt{cpderr} (left) and computation time (right) for rank $10$ tensors of dimensions $10\times 10\times 10$ with factor matrices with correlated columns ($10^\circ$ angles). Because of the difficulty of the problem only very high SNR values lead to a decent solution. The computation time of the GESD method is about  three times that of the GEVD method.}
	\label{cor}
\end{figure}
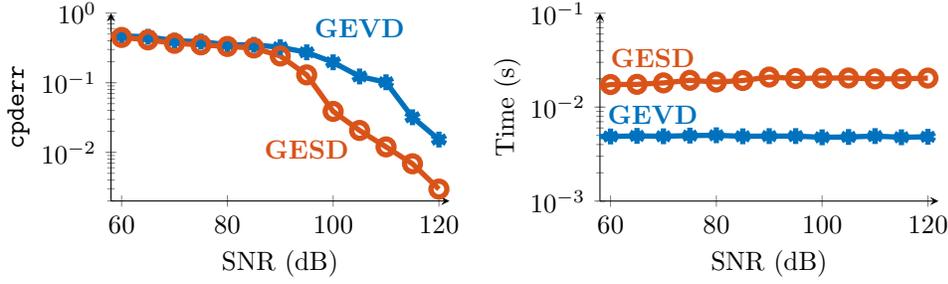
\begin{figure}
	\centering 
%
%
\definecolor{mycolor1}{rgb}{0.00000,0.44700,0.74100}%
\definecolor{mycolor2}{rgb}{0.85000,0.32500,0.09800}%
\begin{tikzpicture}

\begin{axis}[%
width=4.5cm,
height=2.5cm,
at={(0cm,0cm)},
scale only axis,
axis lines = left,
xmin=0,
xmax=360,
xlabel={$R$},
ymode=log,
ymin=1e-13,
ymax=1e-8,
yminorticks=true,
ylabel={\texttt{cpderr}},
axis background/.style={fill=white},
legend style={legend cell align=left,align=left,draw=white!15!black}
]
\addplot [color=mycolor1,solid,line width=2.0pt,mark size=3.0pt,mark=asterisk,mark options={solid}]
  table[row sep=crcr]{%
50	1.40753517824462e-11\\
100	1.1050170710044e-10\\
150	2.29678012529945e-10\\
200	1.47286831580077e-09\\
250	3.19890857280419e-09\\
300	2.31619558508833e-09\\
350	5.49198867810111e-09\\
};

\addplot [color=mycolor2,solid,line width=2.0pt,mark size=3.0pt,mark=o,mark options={solid}]
  table[row sep=crcr]{%
50	4.45083893544801e-13\\
100	1.5226500732203e-12\\
150	1.5233917446853e-12\\
200	6.87435112176926e-12\\
250	1.0081430972777e-11\\
300	7.72254955152168e-12\\
350	1.66623246521134e-11\\
};
\node [mycolor1] at (190,-19){\textbf{GEVD}};
\node [mycolor2] at (190,-24){\textbf{GESD}};
\end{axis}

\begin{axis}[%
width=4.5cm,
height=2.5cm,
at={(6.5cm,0cm)},
scale only axis,
axis lines = left,
xmin=0,
xmax=360,
xlabel={$R$},
ymode=log,
ymin=0.01,
ymax=100,
yminorticks=true,
ylabel={Time (s)},
axis background/.style={fill=white},
legend style={at={(0.786,0.446)},anchor=south west,legend cell align=left,align=left,draw=white!15!black}
]
\addplot [color=mycolor1,solid,line width=2.0pt,mark size=3.0pt,mark=asterisk,mark options={solid}]
  table[row sep=crcr]{%
50	0.06083805\\
100	0.32284025\\
150	1.1090784\\
200	2.9568713\\
250	6.41655225\\
300	11.9852996\\
350	20.466918\\
};

\addplot [color=mycolor2,solid,line width=2.0pt,mark size=3.0pt,mark=o,mark options={solid}]
  table[row sep=crcr]{%
50	0.1527943\\
100	0.7690414\\
150	2.965329\\
200	8.14552195\\
250	17.68447725\\
300	35.47537805\\
350	67.4404437\\
};
\node [mycolor1] at (150,-2){\textbf{GEVD}};
\node [mycolor2] at (80,1.5){\textbf{GESD}};
\end{axis}
\end{tikzpicture}%
	\caption{The relative time difference between the GESD and GEVD methods  remains about the same as tensor rank increases. Median (over $20$ trials) \texttt{cpderr} and computation time for exact rank $R$ tensors of dimensions $R\times R\times R$ with factor matrices sampled from the uniform distribution on $[0,1]$. The GESD method obtains factors with a higher accuracy than the GEVD method for all tensor dimensions.  In this experiment we use a splitting threshold of $5/R$.}
	\label{asymp}
\end{figure}
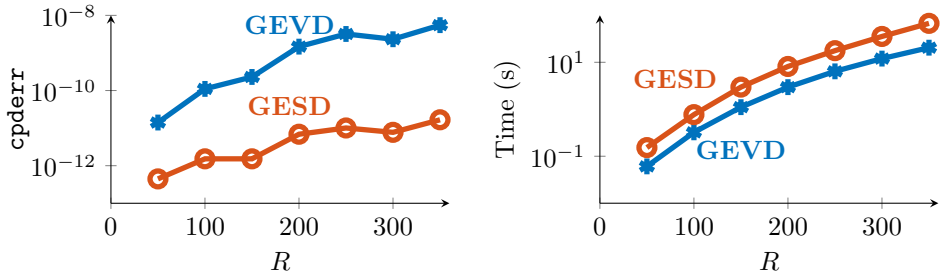

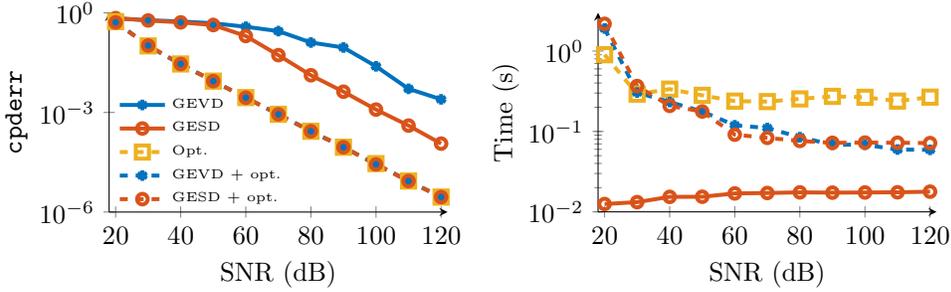
\begin{figure}
	\centering 
%
%
\definecolor{mycolor1}{rgb}{0.00000,0.44700,0.74100}%
\definecolor{mycolor2}{rgb}{0.85000,0.32500,0.09800}%
\definecolor{mycolor3}{rgb}{0.92900,0.69400,0.12500}%
\definecolor{mycolor4}{rgb}{0.49400,0.18400,0.55600}%
\definecolor{mycolor5}{rgb}{0.46600,0.67400,0.18800}%
\begin{tikzpicture}

\begin{axis}[%
width=4.5cm,
height=2.65cm,
at={(0cm,0cm)},
scale only axis,
axis lines = left,
xmin=18,
xmax=122,
xlabel={SNR (dB)},
ymode=log,
ymin=1e-6,
ymax=1,
yminorticks=true,
ylabel={\texttt{cpderr}},
axis background/.style={fill=white},
legend style={legend cell align=left,align=left,draw=none, at={(0.55,0.625)},fill=none}
]
\addplot [color=mycolor1,solid,line width=1.5pt,mark size=2.0pt,mark=asterisk,mark options={solid}]
  table[row sep=crcr]{%
20	0.683203503038114\\
30	0.599607523253934\\
40	0.5493666056244\\
50	0.485370030086641\\
60	0.380965875053933\\
70	0.281908641019267\\
80	0.127807670843945\\
90	0.090425768293556\\
100	0.0244986961455524\\
110	0.0051552023615498\\
120	0.00248311801559746\\
};
\addlegendentry{\tiny{GEVD}};
\addplot [color=mycolor2,solid,line width=1.5pt,mark size=2.0pt,mark=o,mark options={solid}]
  table[row sep=crcr]{%
20	0.695343658128347\\
30	0.589433634203887\\
40	0.516492168173632\\
50	0.42865113439504\\
60	0.200438529858601\\
70	0.0532370883360625\\
80	0.0132014525120288\\
90	0.00421794132663423\\
100	0.00122064939573851\\
110	0.000401769481546915\\
120	0.000115132848141501\\
};
\addlegendentry{\tiny{GESD}};
\addplot [color=mycolor3, dashed,line width=1.5pt,mark size=2.5pt,mark=square,mark options={solid}]
  table[row sep=crcr]{%
20	0.542216577254006\\
30	0.101100957788953\\
40	0.0288628576648067\\
50	0.00878500536697244\\
60	0.00280629464903993\\
70	0.00087956145627145\\
80	0.000271376321123935\\
90	8.95741936951077e-05\\
100	2.8784802355337e-05\\
110	8.57600735850697e-06\\
120	2.82714414808915e-06\\
};
\addlegendentry{\tiny{Opt.}};
\addplot [color=mycolor1,dashed,line width=1.5pt,mark size=2.0pt,mark=asterisk,mark options={solid}]
  table[row sep=crcr]{%
20	0.545132296197856\\
30	0.100600448094018\\
40	0.0282011385623642\\
50	0.00878531360866486\\
60	0.00278820664921864\\
70	0.000879559915310117\\
80	0.000271387329312748\\
90	8.9520654071008e-05\\
100	2.7236838536433e-05\\
110	8.57600034499342e-06\\
120	2.82541008264594e-06\\
};
\addlegendentry{\tiny{GEVD + opt.}};
\addplot [color=mycolor2,dashed,line width=1.5pt,mark size=2.0pt,mark=o,mark options={solid}]
  table[row sep=crcr]{%
20	0.515604710107614\\
30	0.103553203589616\\
40	0.0288645895946101\\
50	0.00878516472079114\\
60	0.0027882146965142\\
70	0.000879559839137909\\
80	0.000271387377490958\\
90	8.95206711954753e-05\\
100	2.72368426881819e-05\\
110	8.57599755604329e-06\\
120	2.82540909842727e-06\\
};
\addlegendentry{\tiny{GESD + opt.}};
\end{axis}

\begin{axis}[%
width=4.5cm,
height=2.65cm,
at={(6.5cm,0cm)},
scale only axis,
axis lines = left,
xmin=18,
xmax=122,
yticklabel style={
	/pgf/number format/fixed,
	/pgf/number format/precision=2
},
xlabel={SNR (dB)},
ymin=1e-2,
ymax=3,
ymode=log,
ylabel={Time (s)},
axis background/.style={fill=white},
legend style={legend cell align=left,align=left,draw=white!15!black}
]
%
%
\addplot [color=mycolor2,solid,line width=1.5pt,mark size=2.0pt,mark=o,mark options={solid}]
  table[row sep=crcr]{%
20	0.0125045\\
30	0.0132127\\
40	0.01536235\\
50	0.0154113\\
60	0.01694725\\
70	0.017162\\
80	0.0174707\\
90	0.01733325\\
100	0.0174122\\
110	0.01750865\\
120	0.01782655\\
};

\addplot [color=mycolor3, dashed,line width=1.5pt,mark size=2.5pt,mark=square,mark options={solid}]
  table[row sep=crcr]{%
20	0.8996942\\
30	0.29106705\\
40	0.336009\\
50	0.2814288\\
60	0.23843985\\
70	0.2352689\\
80	0.25385125\\
90	0.27319585\\
100	0.2660934\\
110	0.23871285\\
120	0.26719085\\
};

\addplot [color=mycolor1,dashed,line width=1.5pt,mark size=2.0pt,mark=asterisk,mark options={solid}]
  table[row sep=crcr]{%
20	1.9052624\\
30	0.3050582\\
40	0.2322806\\
50	0.17726065\\
60	0.11936595\\
70	0.1102767\\
80	0.08482175\\
90	0.0684884\\
100	0.06827015\\
110	0.0595927\\
120	0.05900775\\
};

\addplot [color=mycolor2,dashed,line width=1.5pt,mark size=2.0pt,mark=o,mark options={solid}]
  table[row sep=crcr]{%
20	2.1788289\\
30	0.36714725\\
40	0.2080486\\
50	0.1770741\\
60	0.0912355\\
70	0.0832162\\
80	0.07623565\\
90	0.072202\\
100	0.07268225\\
110	0.07239295\\
120	0.07164205\\
};
\end{axis}
\end{tikzpicture}%
	\caption{Initializing an optimization algorithm with the GESD or GEVD solution is faster than starting from a random initialization. Median (over $20$ trials) \texttt{cpderr} (left) and computation time (right) for rank $10$ tensors of dimensions $10\times 10\times 10$ with factor matrices sampled from the uniform distribution on $[0,1]$. All initializations lead to the same solution. Optimization after initializing with the more accurate GESD solution is equally fast as optimization after initializing with the GEVD solution.}
	\label{optimnocorsmall}
\end{figure}
\begin{figure}
	\centering   
%
%
\definecolor{mycolor1}{rgb}{0.00000,0.44700,0.74100}%
\definecolor{mycolor2}{rgb}{0.85000,0.32500,0.09800}%
\definecolor{mycolor3}{rgb}{0.92900,0.69400,0.12500}%
\definecolor{mycolor4}{rgb}{0.49400,0.18400,0.55600}%
\definecolor{mycolor5}{rgb}{0.46600,0.67400,0.18800}%
\begin{tikzpicture}

\begin{axis}[%
width=4.5cm,
height=2.65cm,
at={(0cm,0cm)},
scale only axis,
axis lines = left,
xmin=18,
xmax=122,
xlabel={SNR (dB)},
ymode=log,
ymin=1e-5,
ymax=1,
yminorticks=true,
ylabel={\texttt{cpderr}},
axis background/.style={fill=white},
legend style={legend cell align=left,align=left,draw=none, at={(0.55,0.625)},fill=none}
]
\addplot [color=mycolor1,solid,line width=1.5pt,mark size=2.0pt,mark=asterisk,mark options={solid}]
  table[row sep=crcr]{%
20	0.88006115378495\\
30	0.870384183828502\\
40	0.731466557126747\\
50	0.587483650915691\\
60	0.467791774058721\\
70	0.406977627745765\\
80	0.360811430027644\\
90	0.290108004977683\\
100	0.242777053650975\\
110	0.213860757196203\\
120	0.0444626835931418\\
};
\addlegendentry{\tiny{GEVD}};
\addplot [color=mycolor2,solid,line width=1.5pt,mark size=2.0pt,mark=o,mark options={solid}]
  table[row sep=crcr]{%
20	0.897834007949406\\
30	0.861558458690532\\
40	0.72523817225309\\
50	0.56727185120639\\
60	0.474594641099878\\
70	0.3698883746823\\
80	0.32618429080653\\
90	0.304748113253124\\
100	0.0224797554953618\\
110	0.0109636674492754\\
120	0.003689191961116\\
};
\addlegendentry{\tiny{GESD}};
\addplot [color=mycolor3, dashed,line width=1.5pt,mark size=2.5pt,mark=square,mark options={solid}]
  table[row sep=crcr]{%
20	0.851066924123612\\
30	0.808747639501973\\
40	0.593214866146594\\
50	0.346096412241622\\
60	0.139711648899097\\
70	0.151852799437846\\
80	0.0971834434083028\\
90	0.182812083981997\\
100	0.156295808578158\\
110	0.0957428355303514\\
120	0.109242877295055\\
};
\addlegendentry{\tiny{Opt.}};
\addplot [color=mycolor1,dashed,line width=1.5pt,mark size=2.0pt,mark=asterisk,mark options={solid}]
  table[row sep=crcr]{%
20	0.909318140617458\\
30	0.857361802357413\\
40	0.625270530075553\\
50	0.347690971807495\\
60	0.052074923472418\\
70	0.0129633454887122\\
80	0.00384311332507874\\
90	0.00126050110728903\\
100	0.000368073663883311\\
110	0.000120238212684844\\
120	3.70614743962261e-05\\
};
\addlegendentry{\tiny{GEVD + opt.}};
\addplot [color=mycolor2,dashed,line width=1.5pt,mark size=2.0pt,mark=o,mark options={solid}]
  table[row sep=crcr]{%
20	0.902630399638805\\
30	0.837935781703851\\
40	0.601112907201379\\
50	0.35753476445843\\
60	0.0559896742543946\\
70	0.0124507019157922\\
80	0.00384443809446163\\
90	0.00125834678924929\\
100	0.00036324688352551\\
110	0.000120518871880476\\
120	3.73248330405573e-05\\
};
\addlegendentry{\tiny{GESD + opt.}};
\end{axis}

\begin{axis}[%
width=4.5cm,
height=2.65cm,
at={(6.5cm,0cm)},
scale only axis,
axis lines = left,
xmin=18,
xmax=122,
yticklabel style={
	/pgf/number format/fixed,
	/pgf/number format/precision=2
},
xlabel={SNR (dB)},
ymin=1e-1,
ymax=10,
ymode=log,
ylabel={Time (s)},
axis background/.style={fill=white},
legend style={legend cell align=left,align=left, at={(0.55,0.75)},draw=white!15!black}
]
\addplot [color=mycolor1,solid,line width=1.5pt,mark size=2.0pt,mark=asterisk,mark options={solid}]
  table[row sep=crcr]{%
20	0.00554285\\
30	0.0054847\\
40	0.0052236\\
50	0.0053294\\
60	0.0054027\\
70	0.0050748\\
80	0.00492985\\
90	0.0049883\\
100	0.0053143\\
110	0.00538545\\
120	0.00524065\\
};

\addplot [color=mycolor2,solid,line width=1.5pt,mark size=2.0pt,mark=o,mark options={solid}]
  table[row sep=crcr]{%
20	0.01291595\\
30	0.01252515\\
40	0.01296435\\
50	0.0143519\\
60	0.0177518\\
70	0.0180406\\
80	0.0196594\\
90	0.0216384\\
100	0.02247705\\
110	0.0211019\\
120	0.02151415\\
};

\addplot [color=mycolor3, dashed,line width=1.5pt,mark size=2.5pt,mark=square,mark options={solid}]
  table[row sep=crcr]{%
20	0.72799855\\
30	0.64043115\\
40	0.75201775\\
50	0.6242321\\
60	0.63961855\\
70	0.70309735\\
80	0.6328114\\
90	0.55358085\\
100	0.7277947\\
110	0.74279545\\
120	0.6344215\\
};

\addplot [color=mycolor1,dashed,line width=1.5pt,mark size=2.0pt,mark=asterisk,mark options={solid}]
  table[row sep=crcr]{%
20	3.4998204\\
30	3.5405405\\
40	3.20238345\\
50	4.0895378\\
60	1.9905068\\
70	1.1255846\\
80	0.84559685\\
90	0.49309725\\
100	0.42253385\\
110	0.2891481\\
120	0.2361133\\
};

\addplot [color=mycolor2,dashed,line width=1.5pt,mark size=2.0pt,mark=o,mark options={solid}]
  table[row sep=crcr]{%
20	3.112642\\
30	4.78493315\\
40	4.18884005\\
50	3.37539825\\
60	2.18001625\\
70	1.1110387\\
80	0.609472\\
90	0.59127295\\
100	0.2523524\\
110	0.2132054\\
120	0.18751655\\
};
\end{axis}
\end{tikzpicture}%
	\caption{For factor matrices with correlated columns, the GESD solution is more accurate than the GEVD solution. An optimization algorithm initialized with the GESD solution is faster than when initialized with the GEVD solution. Median (over $20$ trials) \texttt{cpderr} (left) and computation time (right) for rank $10$ tensors of dimensions $10\times 10\times 10$ with correlated factor matrix columns ($10^\circ$ angles). Initializing the optimization algorithm randomly does not lead to a good solution.}
	\label{optimcorsmall}
\end{figure}
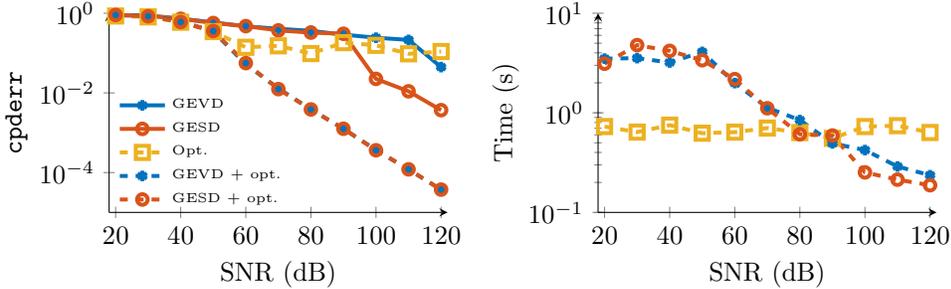

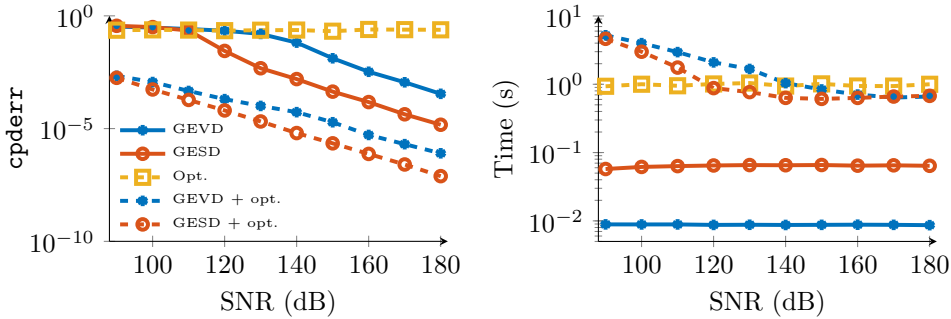
\begin{figure}
	\centering   
%
%
\definecolor{mycolor1}{rgb}{0.00000,0.44700,0.74100}%
\definecolor{mycolor2}{rgb}{0.85000,0.32500,0.09800}%
\definecolor{mycolor3}{rgb}{0.92900,0.69400,0.12500}%
\definecolor{mycolor4}{rgb}{0.49400,0.18400,0.55600}%
\definecolor{mycolor5}{rgb}{0.46600,0.67400,0.18800}%
\begin{tikzpicture}

\begin{axis}[%
	width=4.5cm,
	height=3cm,
	at={(0cm,0cm)},
	scale only axis,
	axis lines = left,
	xmin=88,
	xmax=182,
	xlabel={SNR (dB)},
	ymode=log,
	ymin=1e-10,
	ymax=1,
	yminorticks=true,
	ylabel={\texttt{cpderr}},
	axis background/.style={fill=white},
	legend style={legend cell align=left,align=left,draw=none, at={(0.55,0.57)},fill=none}
]
\addplot [color=mycolor1,solid,line width=1.5pt,mark size=2.0pt,mark=asterisk,mark options={solid}]
table[row sep=crcr]{%
90	0.334463756815703\\
100	0.301935427151377\\
110	0.260075690866929\\
120	0.219838492608813\\
130	0.158176122911358\\
140	0.0651257301397608\\
150	0.0131426061358783\\
160	0.00331584701653166\\
170	0.00113223899040674\\
180	0.000348653537989494\\
};
\addlegendentry{\tiny{GEVD}};

\addplot [color=mycolor2,solid,line width=1.5pt,mark size=2.0pt,mark=o,mark options={solid}]
table[row sep=crcr]{%
90	0.362097837455429\\
100	0.317247410947096\\
110	0.223931126950663\\
120	0.0278600787661532\\
130	0.004748974709824\\
140	0.00157106841421696\\
150	0.000435988483337305\\
160	0.000149984831038957\\
170	4.30833985509058e-05\\
180	1.4831409285225e-05\\
};
\addlegendentry{\tiny{GESD}};

\addplot [color=mycolor3, dashed,line width=1.5pt,mark size=2.5pt,mark=square,mark options={solid}]
table[row sep=crcr]{%
90	0.230891606328338\\
100	0.239118155134309\\
110	0.234138484089438\\
120	0.220608138141708\\
130	0.231113467054405\\
140	0.234683235947475\\
150	0.208737129769673\\
160	0.249030424483022\\
170	0.244516186843899\\
180	0.240827088407389\\
};
\addlegendentry{\tiny{Opt.}};

\addplot [color=mycolor1,dashed,line width=1.5pt,mark size=2.0pt,mark=asterisk,mark options={solid}]
table[row sep=crcr]{%
90	0.00219006329788754\\
100	0.00116112246412343\\
110	0.000466017902223219\\
120	0.000202323770150088\\
130	0.000101257016596229\\
140	5.35580478619866e-05\\
150	1.91409091548852e-05\\
160	5.23043705157663e-06\\
170	2.04162552433609e-06\\
180	8.0299520896475e-07\\
};
\addlegendentry{\tiny{GEVD + opt.}};

\addplot [color=mycolor2,dashed,line width=1.5pt,mark size=2.0pt,mark=o,mark options={solid}]
table[row sep=crcr]{%
90	0.00178667012235756\\
100	0.000544312972625839\\
110	0.000187165327487929\\
120	6.39343632038971e-05\\
130	2.07170789189468e-05\\
140	6.33903189252074e-06\\
150	2.22043729997984e-06\\
160	7.5390312758181e-07\\
170	2.56340840780627e-07\\
180	7.79370441347951e-08\\
};
\addlegendentry{\tiny{GESD + opt.}};

\end{axis}

\begin{axis}[%
	width=4.5cm,
	height=3cm,
	at={(6.5cm,0cm)},
	scale only axis,
	axis lines = left,
	xmin=88,
	xmax=182,
	yticklabel style={
		/pgf/number format/fixed,
		/pgf/number format/precision=2
	},
	xlabel={SNR (dB)},
	ymin=5e-3,
	ymax=10,
	ymode=log,
	ylabel={Time (s)},
	axis background/.style={fill=white},
	legend style={legend cell align=left,align=left,draw=white!15!black}
	]
\addplot [color=mycolor1,solid,line width=1.5pt,mark size=2.0pt,mark=asterisk,mark options={solid}]
  table[row sep=crcr]{%
90	0.00888765\\
100	0.008871\\
110	0.00887495\\
120	0.008732\\
130	0.0087724\\
140	0.00871485\\
150	0.00874835\\
160	0.00879485\\
170	0.00875075\\
180	0.00865855\\
};
\addplot [color=mycolor2,solid,line width=1.5pt,mark size=2.0pt,mark=o,mark options={solid}]
  table[row sep=crcr]{%
90	0.05746255\\
100	0.0616546\\
110	0.0633855\\
120	0.06473235\\
130	0.0656282\\
140	0.0652463\\
150	0.06575985\\
160	0.06433835\\
170	0.0650807\\
180	0.06404345\\
};
\addplot [color=mycolor3, dashed,line width=1.5pt,mark size=2.5pt,mark=square,mark options={solid}]
table[row sep=crcr]{%
90	0.9346572\\
100	1.00118165\\
110	0.95058335\\
120	1.0106505\\
130	1.0424052\\
140	0.94141805\\
150	1.01374\\
160	0.94285675\\
170	0.9407455\\
180	0.9996276\\
};
\addplot [color=mycolor1,dashed,line width=1.5pt,mark size=2.0pt,mark=asterisk,mark options={solid}]
table[row sep=crcr]{%
90	5.214246\\
100	3.9985236\\
110	2.95624495\\
120	2.09116485\\
130	1.6749757\\
140	1.0389998\\
150	0.83926775\\
160	0.70623575\\
170	0.639029\\
180	0.6616489\\
};
\addplot [color=mycolor2,dashed,line width=1.5pt,mark size=2.0pt,mark=o,mark options={solid}]
table[row sep=crcr]{%
90	4.62974315\\
100	2.9966177\\
110	1.75557265\\
120	0.8786383\\
130	0.7681075\\
140	0.6323555\\
150	0.6109882\\
160	0.63163095\\
170	0.66092725\\
180	0.6765798\\
};
\end{axis}
\end{tikzpicture}%
	\caption{ For sufficiently difficult problems, initialization with GESD leads to both a more accurate solution and lower computation time of the optimization algorithm compared to initialization with GEVD. Median (over $100$ trials) \texttt{cpderr} (left) and computation time (right) for rank-$20$ tensors of dimensions $20\times 20\times 20$ with correlated factor matrix columns ($8^\circ$ angles). A splitting threshold of $0.02$ was used in this experiment.}
	\label{optimcorsmall20}
\end{figure}

\subsection{GESD and GEVD vs. noiseless problems}
\label{sec:BadMLSVDSlices}

We now illustrate an additional advantage of GESD compared to GEVD. Namely there is no fixed strategy for pencil selection in GEVD which is able to solve all problems meeting the assumptions of the algorithm even in the noiseless setting. As an example, let $\cT$ be the (real) rank $3$ tensor with frontal slices
\[
\bT_1=\begin{pmatrix} 1 & 0 & 0\\
0 & 1 & 0 \\
0 & 0 & 1
\end{pmatrix} \quad
\bT_2=\begin{pmatrix} 1 & 0 & 0\\
0 & 1/2 & 0 \\
0 & 0 & 1/2
\end{pmatrix} \quad
\bT_3=\begin{pmatrix} 0 & 0 & 0\\
0 & 0 & 1/5 \\
0 & 1/5 & 0
\end{pmatrix}.
\]
As previously discussed, a popular strategy for pencil selection in GEVD is to use the first two slices of a core for the MLSVD of $\cT$. The tensor $\cS$ with frontal slices
\[
\bS_1\approx-\begin{pmatrix} 1.39 & 0 & 0\\
0 & 1.11 & 0 \\
0 & 0 & 1.11
\end{pmatrix} \quad
\bS_2\approx\begin{pmatrix} -0.25 & 0 & 0\\
0 & 0.16 & 0 \\
0 & 0 & 0.16
\end{pmatrix} \quad
\bS_3\approx-\begin{pmatrix} 0 & 0 & 0\\
0 & 0 & 1/5 \\
0 & 1/5 & 0
\end{pmatrix}
\]
is a core for the MLSVD of $\cT$.
If one uses the first two slices of $\cS$ to compute the CPD of $\cT$ via GEVD, then one must compute the eigenvectors of the matrix
\[
\bS_1^{-1} \bS_2 \approx -\begin{pmatrix} 0.181 & 0 & 0\\
0 & 0.143 & 0 \\
0 & 0 & 0.143
\end{pmatrix}.
\]
Since $\spn\{\be_2,\be_3\}$ is an eigenspace of dimension $2$ for this matrix corresponding to the eigenvalue $-0.143$, there is an ambiguity in which eigenvectors should be used. Tensorlab's GEVD implementation, for example, assumes $\be_2$ and $\be_3$ are the correct eigenvectors to take in this subspace and proceeds from there. Instead considering the matrix pencil $(\bS_1,\bS_3)$ shows that $\be_2+\be_3$ and $\be_2-\be_3$ are the correct eigenvectors to take from the subspace $\spn\{\be_2,\be_3\}$.

To make matters worse for GEVD, if one proceeds under the assumption that $\be_1$, $\be_2$, and $\be_3$ are the correct eigenvectors to take, then the GEVD based solution one arrives at is a local optima for the optimization based approach for computing a CPD. Thus using this GEVD based initialization followed by optimization cannot yield the true rank $3$ decomposition of $\cT$.\footnote{The GEVD based decomposition has approximately 0.516 factor matrix error when compared to the correct decomposition.} 

GESD, on the other hand, is easily able to compute the decomposition of this tensor. Although GESD still uses the pencil $(\bS_1,\bS_2)$ on its first iteration, GESD recognizes that the eigenspace $\spn\{\be_2,\be_3\}$ cannot be decomposed with this pencil and delays the decomposition to a later pencil. 

Examined in detail, from the pencil $(\bS_1,\bS_2)$, GESD computes
\[
\bZ_1 = \begin{pmatrix}
1 \\
0 \\ 
0
\end{pmatrix}
\qquad \mathrm{and}
\qquad 
\bZ_2 = \begin{pmatrix}
0 & 0 \\
1 & 0 \\
0 & 1
\end{pmatrix}
\]
and decomposes $\cS = \cS \cdot_2 \bZ_1^\teT + \cS \cdot_2 \bZ_2^\teT$. The tensor $\cS \cdot_2 \bZ_1^\teT$ has rank-$1$ and GESD extracts one rank-$1$ component of $\cT$ from this tensor. $\cS \cdot_2 \bZ_2^\teT$ is a rank-$2$ tensor of size $3 \times 2 \times 3$. Letting $\cA$ denote the core of a $2 \times 2 \times 2$ MLSVD of $\cS \cdot_2 \bZ_2^\teT$, one has
\[
\bA_1 = \begin{pmatrix}
0 & 1.12 \\
1.12 & 0
\end{pmatrix} 
\qquad
\qquad
\bA_2 = \begin{pmatrix}
1/5 & 0 \\
0 & 1/5
\end{pmatrix}.
\]
The generalized eigenvalues of the matrix pencil $(\bA_1,\bA_2)$ are $\spn((1.12,1/5))$ and $\spn((-1.12,1/5))$ with corresponding generalized eigenvectors $\be_1+\be_2$ and $\be_1-\be_2$, respectively. Since this matrix pencil has distinct generalized eigenvalues, GESD successfully extracts the remaining rank-$1$ components of $\cT$ from the pencil.

\section{ Accuracy of GESD versus Corollary \ref{corollary:ComputedJthBound}}
\label{sec:CRboundPerformance} 

In this section we compare the deterministic bound computed in Section \ref{sec:DetermBound} for the existence of two distinct eigenvalue clusters against the accuracy of the first step of our GESD algorithm. We note that the bound computed from the original tensor only provides information about the first step of our algorithm. For this reason, we omit a direct comparison of this bound to the accuracy of the GESD algorithm.

Let $\cT \in \R^{R \times R \times K}$ be a real rank $R$ tensor with CPD $\cpd$, and let $\cN$ be a noise tensor. As previously discussed, given $\cT+\cN$ as an input, the goal of the first step of the GESD algorithm is to robustly decompose $\R^R$ into (at least) two disjoint JGE spaces of $\cT$.

Suppose $\bZ_1$ and $\bZ_2$ are matrices such that the range of each $\bZ_i$ is a JGE space of $\cT$ and such that the matrix $(\bZ_1 \  \bZ_2)$ is invertible. Recall that the $\bZ_i$ have the form $\bZ_i = \bB^{-\teT} \bE_i$ where $\bE_1$ and $\bE_2$ are matrices which for all $j=1,\dots,R$ satisfy that the $j$th row of $\bE_1$ is nonzero if and only if the $j$th row of $\bE_2$ is a zero row. Using this fact we may evaluate the robustness of the first step of our algorithm in two ways. 

Let $\tilde{\bZ}_1$ and $\tilde{\bZ}_2$ be the matrices computed by the first step of our algorithm. Then one should have that the ranges of 
\[
\bB^\teT \tilde{\bZ}_1 =\tilde{\bE}_1 \mathand \bB^\teT \tilde{\bZ}_2 = \tilde{\bE}_2. 
\]
are (approximately) orthogonal. Additionally, there should exist disjoint index sets $J_1,J_2 \subset \{1, \dots, R\}$ such that $J_1 \cup J_2= \{1, \dots, R\}$ and such that
\[
\ran \tilde{\bZ}_1 \approx \mathrm{span}(\{\bf{b}_j\}_{j \in J_1}) \mathand \ran \tilde{\bZ}_2 \approx \mathrm{span}(\{\bf{b}_j\}_{j \in J_2}) 
\]
where the $\bf{b}_j$ are columns of the matrix $(\bB^{-\teT})$. That is, the principal (largest) angle between the range of $\tilde{\bZ}_i$ and $\mathrm{span}(\{\bf{b}_j\}_{j \in J_i})$ should be approximately zero.

In Figure \ref{lowrank}, we plot the median (over 50 trials) of the aforementioned  angles as a function of the SNR of $\cT$ to $\cN$ where $\cN$ is randomly generated Gaussian noise. That is, for many choices of the SNR $s$, we randomly generate noise tensors $\cN$ such that the SNR of $\cT$ to $\cN$ is equal to $s$. In each trial we compute subspaces $\tilde{\bZ}_1$ and $\tilde{\bZ}_2$ for the  tensor $\cT+\cN$ using the GESD algorithm. We check accuracy by plotting the  median over the trials of the angles described above. As before, $\cT$ has been generated by independently sampling all entries of the factor matrices $\bA,\bB,$ and $\bC$ uniformly on $[0,1]$ and forming the associated low-rank tensor. 

The first image of Figure \ref{lowrank} shows the $R \times R \times R$ rank $R$ case with $R=4$, while the case $R=8$ is displayed in the second image. In addition to plotting the aforementioned angles, the  bound from Corollary \ref{corollary:ComputedJthBound} for existence of two distinct eigenvalue clusters is plotted. The  deterministic bound is computed by taking the largest bound found using either five or fifty different unitary matrices and  the subpencil which gives the best bound is the subpencil used to compute generalized eigenspaces in the GESD algorithm.

We find that errors in GESD are well predicted by this bound. For SNR above the deterministic bound, the first step of GESD is highly accurate, while small errors start to appear when SNR decreases below the deterministic bound.
 For all SNR values, the inaccuracy in the orthogonality of the $\tilde{\bE}_i$ and the inaccuracy in the angle between the $\tilde{\bZ}_i$ and the columns of $\bB^{-\teT}$ are observed to have similar magnitude.

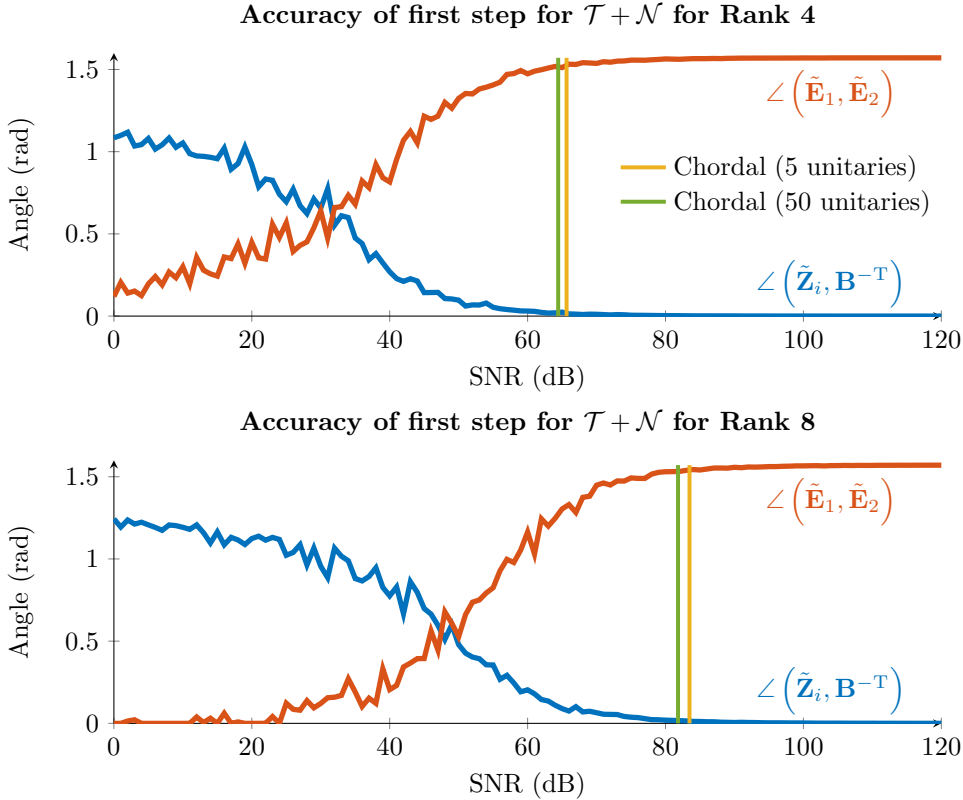
\begin{figure}
	\centering
%
%
\definecolor{mycolor1}{rgb}{0.00000,0.44700,0.74100}%
\definecolor{mycolor2}{rgb}{0.85000,0.32500,0.09800}%
\definecolor{mycolor3}{rgb}{0.92900,0.69400,0.12500}%
\definecolor{mycolor4}{rgb}{0.49400,0.18400,0.55600}%
\definecolor{mycolor5}{rgb}{0.46600,0.67400,0.18800}%
\definecolor{mycolor6}{rgb}{0.30100,0.74500,0.93300}%
\begin{tikzpicture}

\begin{axis}[%
width=11cm,
height=3.5cm,
at={(0cm,0cm)},
scale only axis,
axis lines = left,
xmin=0,
xmax=60,
xticklabels={,0,20,40,60,80,100,120,70,80,90,100,110,120},
xlabel={SNR (dB)},
ymin=0,
ymax=1.6,
ylabel={Angle (rad)},
axis background/.style={fill=white},
title style={font=\bfseries},
title={Accuracy of first step for $\mathcal{T}+\mathcal{N}$ for Rank 4},
axis x line*=bottom,
axis y line*=left,
legend style={at={(0.6,0.5)},anchor=west,legend cell align=left,align=left,draw=white!15!white}
]
\addplot [color=mycolor1,solid,line width=2pt, forget plot]
table[row sep=crcr]{%
0	1.08442228507438\\
0.5	1.0989280634927\\
1	1.1180923667669\\
1.5	1.03448420601397\\
2	1.04285444276875\\
2.5	1.08035892023348\\
3	1.01559512399678\\
3.5	1.04189963491674\\
4	1.08282204896088\\
4.5	1.02669200364507\\
5	1.05288062045649\\
5.5	0.987731946247769\\
6	0.973970155771339\\
6.5	0.971468689091199\\
7	0.965777736548526\\
7.5	0.957024568837467\\
8	1.02116962618742\\
8.5	0.893027397692615\\
9	0.927259165394729\\
9.5	1.0279941352728\\
10	0.921994811375725\\
10.5	0.786377813188594\\
11	0.833747412371551\\
11.5	0.825916313065561\\
12	0.741142263879944\\
12.5	0.695435512515702\\
13	0.771787592035193\\
13.5	0.671365182221602\\
14	0.623673703762665\\
14.5	0.712167429987419\\
15	0.657167426510015\\
15.5	0.774444847204935\\
16	0.552666484064127\\
16.5	0.610501398936938\\
17	0.599094425068754\\
17.5	0.474436671645562\\
18	0.441254366414565\\
18.5	0.343074159675951\\
19	0.379730796154353\\
19.5	0.323176682162234\\
20	0.269391961798447\\
20.5	0.228023299795962\\
21	0.210839963432591\\
21.5	0.227102538552909\\
22	0.213338282913468\\
22.5	0.142764538442288\\
23	0.143307366117571\\
23.5	0.144069861989364\\
24	0.106741517173822\\
24.5	0.104113554981557\\
25	0.0970550044613008\\
25.5	0.0611147682681336\\
26	0.0679608966413554\\
26.5	0.0672774239130103\\
27	0.079035537297486\\
27.5	0.0533260022574917\\
28	0.0450238104824262\\
28.5	0.0398872858518414\\
29	0.0360152460064141\\
29.5	0.03137331159948\\
30	0.0305251088438707\\
30.5	0.0287800113117606\\
31	0.022509004640323\\
31.5	0.0181243599542625\\
32	0.0196689208732269\\
32.5	0.0239766973928377\\
33	0.0133414568708687\\
33.5	0.0119125192139436\\
34	0.00962397283463669\\
34.5	0.0116655753173009\\
35	0.0110030493441646\\
35.5	0.00918040245303095\\
36	0.00704962238192234\\
36.5	0.00913108577205931\\
37	0.00800986793961012\\
37.5	0.00525686984438248\\
38	0.00472320817794856\\
38.5	0.00440728891485704\\
39	0.0038694296517232\\
39.5	0.00329695185135764\\
40	0.00276626705125566\\
40.5	0.00266866802205276\\
41	0.00241895799154614\\
41.5	0.0019454569170192\\
42	0.00162797180137085\\
42.5	0.00138574589515647\\
43	0.00151213725324354\\
43.5	0.00130488775485787\\
44	0.00132937589836936\\
44.5	0.00118305621041191\\
45	0.00111311045134889\\
45.5	0.000782411853540921\\
46	0.000623165463613153\\
46.5	0.000622078785664498\\
47	0.000696876424862005\\
47.5	0.000577073736984597\\
48	0.000379622142530922\\
48.5	0.000368336172310849\\
49	0.000368807810119862\\
49.5	0.000331173295976491\\
50	0.000314626908901474\\
50.5	0.000295458852754496\\
51	0.000188202847743371\\
51.5	0.000271555073377123\\
52	0.000185637160398142\\
52.5	0.000144005266835249\\
53	0.000142535318145243\\
53.5	0.000115242305276654\\
54	0.000101810816427345\\
54.5	0.00012051112719162\\
55	0.000102662815223089\\
55.5	9.79948254991161e-05\\
56	6.19781013222947e-05\\
56.5	5.83887869325378e-05\\
57	6.40949668864603e-05\\
57.5	4.62796312153362e-05\\
58	4.56448543932035e-05\\
58.5	4.59258108746815e-05\\
59	3.75893409713121e-05\\
59.5	3.78775165177411e-05\\
60	3.28936633797382e-05\\
};

\addplot [color=mycolor2,solid,line width=2pt, forget plot]
  table[row sep=crcr]{%
0	0.118952561369691\\
0.5	0.204537147982362\\
1	0.139959171905425\\
1.5	0.152019493784181\\
2	0.124578419736388\\
2.5	0.195916087438876\\
3	0.240446339633316\\
3.5	0.189742276252856\\
4	0.267853251474101\\
4.5	0.233210552871868\\
5	0.300287799277993\\
5.5	0.196713654598208\\
6	0.356504029313773\\
6.5	0.279008058675322\\
7	0.256864400292318\\
7.5	0.2432061926755\\
8	0.359550413664139\\
8.5	0.349693196489885\\
9	0.438784046997763\\
9.5	0.321702757915189\\
10	0.447762084556235\\
10.5	0.354342879712003\\
11	0.346844069037697\\
11.5	0.558440267543486\\
12	0.478272638194809\\
12.5	0.567001523775981\\
13	0.393194666185356\\
13.5	0.429977966201892\\
14	0.451374341152588\\
14.5	0.537077238900148\\
15	0.640255094755577\\
15.5	0.474475517011584\\
16	0.660132144072186\\
16.5	0.665202526215293\\
17	0.734302453898127\\
17.5	0.680714770259186\\
18	0.821888102806634\\
18.5	0.781129773992122\\
19	0.921193521998876\\
19.5	0.841438312648582\\
20	0.816083018320446\\
20.5	0.95270837674327\\
21	1.0694321515374\\
21.5	1.12285929574536\\
22	1.05338177998226\\
22.5	1.21566627794631\\
23	1.19636894241076\\
23.5	1.21611203668914\\
24	1.29956171620282\\
24.5	1.26442958090758\\
25	1.32425722548881\\
25.5	1.35408329046138\\
26	1.35104191323905\\
26.5	1.38125874914929\\
27	1.39203480548583\\
27.5	1.4058567196433\\
28	1.42242849981185\\
28.5	1.47012125608089\\
29	1.47500174702346\\
29.5	1.49350803639744\\
30	1.47364113037655\\
30.5	1.48960483421076\\
31	1.49829815837419\\
31.5	1.50889996217659\\
32	1.51837700593937\\
32.5	1.51134795165603\\
33	1.53214066467798\\
33.5	1.53045989213524\\
34	1.54125618589284\\
34.5	1.53936813607283\\
35	1.53682461911359\\
35.5	1.54713861701157\\
36	1.5441043194813\\
36.5	1.55066512864984\\
37	1.55180776848994\\
37.5	1.55674240809511\\
38	1.55663645651722\\
38.5	1.55617294314795\\
39	1.55895968127536\\
39.5	1.56107877589387\\
40	1.56414768604933\\
40.5	1.5629738084166\\
41	1.56186554274917\\
41.5	1.56406286173942\\
42	1.56580261200227\\
42.5	1.56588125829077\\
43	1.56584596261982\\
43.5	1.56691590859828\\
44	1.56609579069302\\
44.5	1.56773092329632\\
45	1.56825022982041\\
45.5	1.5689955856075\\
46	1.56896491483921\\
46.5	1.56921893294088\\
47	1.56908427346586\\
47.5	1.56940990138051\\
48	1.56924589494505\\
48.5	1.56955427658568\\
49	1.56952049815927\\
49.5	1.56967416917552\\
50	1.57000619243963\\
50.5	1.56992735323552\\
51	1.56994626893463\\
51.5	1.56995274338237\\
52	1.57035755073039\\
52.5	1.57031673077238\\
53	1.57047706832547\\
53.5	1.57043621576937\\
54	1.57052554951571\\
54.5	1.57052360811308\\
55	1.57052917970976\\
55.5	1.5704867326166\\
56	1.5705785605365\\
56.5	1.57059995459526\\
57	1.57056803423777\\
57.5	1.57061624487576\\
58	1.57068269609237\\
58.5	1.57066874736405\\
59	1.5707064617317\\
59.5	1.57068352156424\\
60	1.57070927870917\\
};

\addplot [color=mycolor3,solid,line width=1.5pt]
table[row sep=crcr]{%
	32.8261638083169	0\\
	32.8261638083169	1.5707963267949\\
};
\addlegendentry{Chordal (5 unitaries)};

\addplot [color=mycolor5,solid,line width=1.5pt]
table[row sep=crcr]{%
	32.2174522818788	0\\
	32.2174522818788	1.5707963267949\\
};
\addlegendentry{Chordal (50 unitaries)};

\node [mycolor1] at (520,20){$\angle\left(\tilde{\mathbf{Z}}_i,\mathbf{B}^{-\text{T}}\right)$};
\node [mycolor2] at (520,135){$\angle\left(\tilde{\mathbf{E}}_1,\tilde{\mathbf{E}}_2\right)$};
\end{axis}
\end{tikzpicture}
	\vspace{0.001cm}
%
%
\definecolor{mycolor1}{rgb}{0.00000,0.44700,0.74100}%
\definecolor{mycolor2}{rgb}{0.85000,0.32500,0.09800}%
\definecolor{mycolor3}{rgb}{0.92900,0.69400,0.12500}%
\definecolor{mycolor4}{rgb}{0.49400,0.18400,0.55600}%
\definecolor{mycolor5}{rgb}{0.46600,0.67400,0.18800}%
\definecolor{mycolor6}{rgb}{0.30100,0.74500,0.93300}%
\begin{tikzpicture}

\begin{axis}[%
width=11cm,
height=3.5cm,
at={(0cm,0cm)},
scale only axis,
axis lines = left,
xmin=0,
xmax=60,
xlabel={SNR (dB)},
xticklabels={,0,20,40,60,80,100,120,70,80,90,100,110,120},
ymin=0,
ymax=1.6,
ylabel={Angle (rad)},
axis background/.style={fill=white},
title style={font=\bfseries},
title={Accuracy of first step for $\mathcal{T}+\mathcal{N}$ for Rank 8},
axis x line*=bottom,
axis y line*=left,
legend style={at={(0.78,0.5)},anchor=west,legend cell align=left,align=left,draw=white!15!white}
]
\addplot [color=mycolor1,solid,line width=2pt, forget plot]
  table[row sep=crcr]{%
0	1.24350780509161\\
0.5	1.19396770322793\\
1	1.2368778909416\\
1.5	1.21209619634431\\
2	1.2232805966233\\
2.5	1.20737351069878\\
3	1.19184108819369\\
3.5	1.17396522999557\\
4	1.2066825011671\\
4.5	1.20251770609696\\
5	1.19236655957411\\
5.5	1.17997123126003\\
6	1.20698562462569\\
6.5	1.15923505948619\\
7	1.09710996863001\\
7.5	1.16445698452237\\
8	1.08599926015065\\
8.5	1.13139733863224\\
9	1.11541810820601\\
9.5	1.08774767957783\\
10	1.12407003957571\\
10.5	1.13826106696359\\
11	1.11419886899403\\
11.5	1.13111292155765\\
12	1.11877920478108\\
12.5	1.02235396817541\\
13	1.03955414966936\\
13.5	1.08662944538232\\
14	0.963710897385099\\
14.5	1.0663282842699\\
15	0.955341522683575\\
15.5	0.886446436790791\\
16	1.06566318061716\\
16.5	1.01657478545392\\
17	0.988559977171622\\
17.5	0.880526269261473\\
18	0.866325713229488\\
18.5	0.893162078326091\\
19	0.946398343331817\\
19.5	0.826955145012955\\
20	0.775535416866739\\
20.5	0.829492911047584\\
21	0.666744618006515\\
21.5	0.861516082261816\\
22	0.796877229051756\\
22.5	0.698393626675901\\
23	0.664131734393197\\
23.5	0.596261479673699\\
24	0.51038449100283\\
24.5	0.5995478311603\\
25	0.478619007332422\\
25.5	0.426920132299301\\
26	0.40313240526745\\
26.5	0.393215139448223\\
27	0.356132047955611\\
27.5	0.355176046483987\\
28	0.268106694310575\\
28.5	0.292152828530331\\
29	0.245500242571149\\
29.5	0.19197623989799\\
30	0.203898574894037\\
30.5	0.179473180560566\\
31	0.14494499449862\\
31.5	0.135533388954118\\
32	0.110149784189199\\
32.5	0.0897679466594036\\
33	0.0747464112245304\\
33.5	0.101509182156282\\
34	0.0694795667268616\\
34.5	0.0734317886306115\\
35	0.0646369831105952\\
35.5	0.0554949554130544\\
36	0.0544178015051031\\
36.5	0.0536534409922708\\
37	0.0420692338547676\\
37.5	0.037429059944869\\
38	0.0350444950851431\\
38.5	0.0287721156316967\\
39	0.0225534906791074\\
39.5	0.0218419042117621\\
40	0.0193427857816297\\
40.5	0.0180338302261284\\
41	0.0166392634178517\\
41.5	0.0148838064898852\\
42	0.0119796343036823\\
42.5	0.0113603347622081\\
43	0.00969742736370669\\
43.5	0.00960369262071821\\
44	0.00713130538782562\\
44.5	0.00628083999764332\\
45	0.00515498723679613\\
45.5	0.00645903575002663\\
46	0.00521615338881697\\
46.5	0.00488264925347965\\
47	0.00378181169957044\\
47.5	0.00309974807061839\\
48	0.00325430234785874\\
48.5	0.00302475042309047\\
49	0.00251567782077386\\
49.5	0.0025479031556605\\
50	0.00215009679949692\\
50.5	0.00175699173471805\\
51	0.00134517236187213\\
51.5	0.00119385739082005\\
52	0.00127102851494569\\
52.5	0.00128143114512013\\
53	0.00108150643901727\\
53.5	0.000824131793508544\\
54	0.000781894110346822\\
54.5	0.000705312924460191\\
55	0.000830457329301545\\
55.5	0.000437021815642865\\
56	0.000529758346416541\\
56.5	0.000448175896176946\\
57	0.000408200696329583\\
57.5	0.00036140746455077\\
58	0.000248657267926953\\
58.5	0.00032758887069792\\
59	0.000263545231559142\\
59.5	0.000228697200825278\\
60	0.000188429477966701\\
};

\addplot [color=mycolor2,solid,line width=2pt, forget plot]
  table[row sep=crcr]{%
0	2.98023223876953e-08\\
0.5	0\\
1	0.0144258439334217\\
1.5	0.0217050435654713\\
2	0\\
2.5	0\\
3	1.66600046865626e-08\\
3.5	1.66600046865626e-08\\
4	0\\
4.5	1.66600046865626e-08\\
5	0\\
5.5	0\\
6	0.0387690612155159\\
6.5	0.00525245149091625\\
7	0.0311524281993963\\
7.5	1.97123833825036e-08\\
8	0.061495612638039\\
8.5	0\\
9	0.00885502997021503\\
9.5	1.82501207499443e-08\\
10	0\\
10.5	1.66600046865626e-08\\
11	0.000407971891378824\\
11.5	0.0413141224389494\\
12	1.49011611938477e-08\\
12.5	0.122767689838613\\
13	0.0852224217055857\\
13.5	0.0798993565563472\\
14	0.13826425259879\\
14.5	0.0942683613958098\\
15	0.121093715774899\\
15.5	0.155918763181507\\
16	0.159082902891836\\
16.5	0.169971424665281\\
17	0.270742202137501\\
17.5	0.190746297535148\\
18	0.10257817729441\\
18.5	0.153268729118254\\
19	0.121696944316469\\
19.5	0.331858086770114\\
20	0.205196762146073\\
20.5	0.230543430126397\\
21	0.343175942412575\\
21.5	0.366751397544285\\
22	0.39364582919894\\
22.5	0.394127755162464\\
23	0.561624069081524\\
23.5	0.445025770665638\\
24	0.678261058488536\\
24.5	0.608885006609369\\
25	0.527038190671598\\
25.5	0.66167679670925\\
26	0.736922076583607\\
26.5	0.750736070434876\\
27	0.793092405666413\\
27.5	0.824689577348371\\
28	0.928416623255653\\
28.5	0.997494903590945\\
29	0.978238916666121\\
29.5	1.05416206603715\\
30	1.16388885287401\\
30.5	1.03008963886236\\
31	1.24408151286916\\
31.5	1.19975691323613\\
32	1.24737141638672\\
32.5	1.30410122947447\\
33	1.33053785318343\\
33.5	1.28278612796963\\
34	1.3747462155353\\
34.5	1.38143754689296\\
35	1.44871052982915\\
35.5	1.46183136359775\\
36	1.44969629364839\\
36.5	1.47456932909329\\
37	1.47344066861888\\
37.5	1.49285468191569\\
38	1.48989973801244\\
38.5	1.48964469748668\\
39	1.51607400568327\\
39.5	1.52687902562387\\
40	1.53029206082094\\
40.5	1.53103325671089\\
41	1.53224212808876\\
41.5	1.53872974325674\\
42	1.54376280572337\\
42.5	1.54105139100299\\
43	1.54680817198886\\
43.5	1.5527889868531\\
44	1.5530683460765\\
44.5	1.55226496590452\\
45	1.55731743866626\\
45.5	1.55496860371435\\
46	1.55940319437526\\
46.5	1.55853321716199\\
47	1.55978507481588\\
47.5	1.56164714446033\\
48	1.56166373618129\\
48.5	1.56352961502494\\
49	1.56440293590514\\
49.5	1.566107888111\\
50	1.56599144081222\\
50.5	1.56697376189246\\
51	1.56726915046694\\
51.5	1.56584028243802\\
52	1.56767427412401\\
52.5	1.56790602870893\\
53	1.56886054393465\\
53.5	1.56827892659753\\
54	1.56904087192022\\
54.5	1.56877200314423\\
55	1.56895983042675\\
55.5	1.56964313676171\\
56	1.56950501083253\\
56.5	1.5696477813898\\
57	1.5699460758034\\
57.5	1.56979465113018\\
58	1.57003893826804\\
58.5	1.57016136656742\\
59	1.57019213481575\\
59.5	1.57024167719391\\
60	1.57024586260577\\
};

\addplot [color=mycolor3,solid,line width=1.5pt]
table[row sep=crcr]{%
41.7403139705457	0\\
41.7403139705457	1.5707963267949\\
};

\addplot [color=mycolor5,solid,line width=1.5pt]
table[row sep=crcr]{%
	40.9080322335605	0\\
	40.9080322335605	1.5707963267949\\
};

\node [mycolor1] at (520,20){$\angle\left(\tilde{\mathbf{Z}}_i,\mathbf{B}^{-\text{T}}\right)$};
\node [mycolor2] at (520,135){$\angle\left(\tilde{\mathbf{E}}_1,\tilde{\mathbf{E}}_2\right)$};

\end{axis}
\end{tikzpicture}
	\caption{The angle between $\tilde{\mathbf{E}}_1$ and $\tilde{\mathbf{E}}_2$ stays close to $\pi/2$ until the bound from Corollary \ref{corollary:ComputedJthBound} is crossed, while the angle between $\tilde{\mathbf{Z}}_i$ and columns of $\mathbf{B}^{-\text{T}}$ stays close to zero until this bound is crossed. Trying more unitary matrices shifts the bound sightly to the left. Mean (over $50$ trials) angle between $\tilde{\mathbf{E}}_1$ and $\tilde{\mathbf{E}}_2$ and between $\tilde{\mathbf{Z}}_i$ and columns of $\mathbf{B}^{-\text{T}}$ for a rank-$4$ tensor of dimensions $4\times 4 \times 4$ (top) and for a rank-$8$ tensor of dimensions $8\times 8 \times 8$ (bottom), picking the best unitary matrix out of either $5$ or $50$ trials. The SNR is varied from $0$ dB to $120$ dB. }
	\label{lowrank}
\end{figure}

\section{Conclusion}

We have presented an algebraic generalized eigenspace based algorithm for CPD computation which uses multiple subpencils of a given tensor to compute a decomposition. A key idea in the GESD algorithm is that one may combine information from many subpencils of a tensor to compute its CPD, thus allowing one to only perform trustworthy computations in each pencil. The performance of GESD has been compared to that of the popular GEVD algorithm. We find that in all situations, GESD is at least as accurate as GEVD, and that GESD is typically significantly more accurate than GEVD. We see that GESD particularly shines in difficult problems where the factor matrices of a tensor are highly correlated. 

While the GESD algorithm typically takes about three times as long to run on a core tensor as GEVD, in practical settings which involve compression steps and optimization steps, a CPD computation using GESD for algebraic initialization of optimization is at least as fast as a CPD computation using GEVD for algebraic initialization. We observe that GESD based initializations are often significantly more accurate than GEVD based initializations and that they can lead to more  accurate and reliable optimization. We also illustrate that no single fixed pencil selection strategy for GEVD can solve all given noiseless CPD problems even when the initial tensor meets the assumptions of the GEVD algorithm. Since GESD intelligently determines which computations can be reliably performed with a given subpencil, GESD does not suffer from this issue. 

In addition to experimental analysis, we provide a theoretical analysis of the GESD algorithm. Here we compute a deterministic bound for the existence of a subpencil which has two distinct clusters of generalized eigenvalues which have not coalesced. This bound may be interpreted as a theoretical guarantee that a next step in the GESD algorithm is able to compute generalized eigenspaces which are meaningfully related to the generalized eigenspaces of the original noiseless tensor.

Currently we assume the existence of reasonably sized eigenvalue gaps which may be used to compute at least two distinct generalized eigenspaces. While this assumption is reasonable for many tensors of low-rank, this assumption may fail to hold for particularly ill-conditioned tensors or when rank becomes large. Indeed, for generic tensors, the expected generalized eigenvalue gap becomes arbitrarily small as tensor rank and dimensions increase. Thus for tensors of sufficiently high rank, one should not expect to find a suitable gap. Resolving this is a direction for further research.

\section*{Acknowledgements}

The authors thank Alexander Wagner, Ph.D for discussions which helped improve exposition in the article.  We also thank the anonymous referees for their helpful comments.

\bibliographystyle{siamplain}
\bibliography{references}

\end{document}